\documentclass[11pt]{amsart}
\usepackage{amssymb,mathrsfs,enumerate}
\usepackage{amsmath,amsfonts,amssymb,amscd,amsthm,bbm}
\usepackage{kotex}
\usepackage{caption, subcaption}
\usepackage{ulem}
\usepackage{comment}
\usepackage{graphicx} 
\usepackage{mathtools}
\usepackage{multirow}

\usepackage[table]{xcolor} 
\usepackage{algorithm}
\usepackage{algpseudocode}
\usepackage{hyperref}
\usepackage{tikz}
\hypersetup{colorlinks,breaklinks,
	linkcolor=blue,urlcolor=blue,
	anchorcolor=blue,citecolor=blue}
\usepackage[nameinlink, capitalise, noabbrev]{cleveref}
\usepackage{overpic}
\usepackage{enumitem}

\newcommand{\T}{\rule{0pt}{2.6ex}}
\newcommand{\B}{\rule[-1.2ex]{0pt}{0pt}}
\newcommand{\stkout}[1]{\ifmmode\text{\sout{\ensuremath{#1}}}\else\sout{#1}\fi}
\DeclareMathOperator*{\essinf}{ess\,inf}
\DeclareMathOperator*{\argmax}{arg\,max}

\definecolor{CBO}{RGB}{252,231,231}
\definecolor{adCBO}{RGB}{215,235,255}
\definecolor{adamCBO}{RGB}{221,247,221}

\title[Consensus based optimization with average drift]{CBO algorithm with average drift and applications to portfolio optimization}

\author[Bae]{Hyeong-Ohk Bae}
\address[Bae]{Department of Financial engineering, Ajou University, Suwon 16499, Republic of Korea}
\email{hobae@ajou.ac.kr}

\author[Ha]{Seung-Yeal Ha}
\address[Ha]{Department of Mathematical Sciences and Research Institute of Mathematics, Seoul National University, Seoul 08826, Republic of Korea}
\email{syha@snu.ac.kr}

\author[Min]{Chanho Min}
\address[Min]{Department of Financial engineering, Ajou University, Suwon 16499, Republic of Korea}
\email{chanhomin@ajou.ac.kr}

\author[Yoo]{Jane Yoo}
\address[Yoo]{Department of Financial engineering, Ajou University, Suwon 16499, Republic of Korea}
\email{janeyoo@ajou.ac.kr}

\author[Yoon]{Jaeyoung Yoon*}
\address[Yoon]{\color{black}Department of Mathematics, School of Computation Information and
	Technology,\newline Technical University of Munich, Garching bei M\"unchen 85748, Germany
}
\email{{\color{black}wodud1516@gmail.com, jaeyoung.yoon@tum.de}}

\numberwithin{equation}{section}
\newtheorem{theorem}{Theorem}[section]

\newtheorem{lemma}[theorem]{Lemma}
\newtheorem{corollary}[theorem]{Corollary}

\newtheorem{remark}[theorem]{Remark}

\newcommand{\bbr}{\mathbb R}

\newcommand{\bbe}{\mathbb E}

\newcommand{\e}{\varepsilon}

\newcommand{\br}{\bf r}

\newcommand{\bx}{\boldsymbol{x}}
\newcommand{\by}{\mbox{\boldmath $y$}}

\newcommand{\bv}{\mbox{\boldmath $v$}}

\newcommand{\bm}{\mbox{\boldmath $m$}}
\newcommand{\bg}{\mbox{\boldmath $g$}}

\begin{document}
	\date{\today}
	
	\subjclass[2010]{65K10, 70F10, 90C90}
	\keywords{Consensus Based Optimization, adaptive momentum, average drift, portfolio Selection, regret bound.}
	
	\thanks{\textbf{Acknowledgment.} H.O. Bae is supported by the Basic Research Program through the National Research Foundation of Korea(NRF) funded by the Ministry of Education and Technology ({\color{black}NRF-2021R1A2C1093383}) and by Ajou University Research Fund. The work of S.-Y. Ha is supported by National Research Foundation (NRF) grant funded by the Korea government(MIST) (RS-2025-00514472). J. Yoo is supported by {\color{black}NRF-S2025A040300202} and Ajou University Research Fund. {
			\color{black}J. Yoon acknowledges the Alexander von Humboldt Stiftung for support through a postdoctoral research fellowship.}}
	
	\begin{abstract}
		We propose a consensus based optimization algorithm with average drift (in short Ad-CBO) and provide a theoretical framework for it. In the theoretical analysis, we show that particle solutions to Ad-CBO converge to a global minimizer. In numerical simulations, we examine Ad-CBO's performance in optimizing static and dynamic objective functions. As a real-time application, we test the efficiency of Ad-CBO to find the optimal portfolio given stochastically evolving multi-asset prices in a financial market. The proposed Ad-CBO  exhibits higher searching speed, lower tracking errors and regret bound than the CBO without stochastic diffusion.
	\end{abstract}
	\maketitle \centerline{\date}
	

	\section{Introduction}\label{sec1}
	\setcounter{equation}{0}
	Many real-world problems are dynamic (or time-dependent) and high-dimensional. Moreover, these require a systemic approach to real-time optimizations in continuously changing environments. For intelligent robotic systems, tracking a target or planning a path in complex and dynamic environments is an essential task. Similar problems can be found in various disciplines, e.g.,  a predator chasing a flock of running prey, mutation problems in various probabilistic models of genome evolution for the distribution of gene families and online optimization problems searching for an optimal investment strategy in a large and complicated space in finance, etc.
	
	In this paper, we propose a {\it consensus based optimization algorithm with adaptive average drift} (in short, Ad-CBO) to track a moving optimum in a time-varying and complex environment. A consensus based optimization (CBO) method is a first-order particle-based optimization algorithm {\color{black}\cite{P-T-T-M}} which is fast and derivative free, unlike gradient-based algorithms \cite{Be}. Thus, it can be easily implemented to solve high-dimensional non-convex, {\color{black}non-differentiable} optimization problems \cite{C-J-L-Z, T-P-B-S}, as well as problems with constraints \cite{BHKLMY}. For the convergence and error analysis to CBO algorithms, we refer to \cite{C-C-T-T,F-K-R,H-J-Kc,HJK}. To set up the stage, we first discuss the concept of an average drift to the CBO algorithm, which can enhance its ability to track a moving target. By updating the overall position of a particle swarm proportional to the distance between the center of the distribution and weighted average, the average drift helps test particles prevent getting stuck in a suboptimal solution and have more chances to explore unseen spaces where new candidates are found. 
	
	The main results of this paper are two-fold. First, we provide a sufficient framework to Ad-CBO which can induce the emergent behavior, especially {\it consensus}, of particles. Using numerical simulations, we evaluate the performance of Ad-CBO on both static and dynamic optimization problems. For the static problem, we compare the performance of Ad-CBO with CBO in minimizing the Rastrigin function. Our results show that the average drift improves the performance of CBO and it allows faster convergence to the optimum with lower errors.
	
	Second, we investigate the computational efficiency of Ad-CBO in online portfolio selection (OPS) problem from finance, which is the on-time version of portfolio selection problem introduced by economist Markowitz \cite{M}. It deals with a trading strategy that sequentially allocates capital among a group of assets to maximize investment returns. The OPS involves finding optimal weights on multiple assets that have continuously and stochastically changing returns at every trading moment, making it a real-time searching problem. We perform simulations on real applications and obtain results showing that the average drift improves the performance of CBO in terms of optimized value, its variance and portfolio's volatility (see Table \ref{new_sim_port}).
	
	We also present analytical and numerical solutions for the upper bound of regret, which measures the difference in exponential growth rates of wealth based on the suggested investment strategy and the best constant strategy in hindsight. To the best of our knowledge, this is the first rigorous analysis of the upper bound of the regret bound, which is widely used to evaluate algorithm performance in OPS. Our approach provides a theoretical foundation for evaluating the performance of OPS algorithms and can be extended to other online optimization problems in finance and other fields. \newline
	
	The rest of this paper is organized as follows. In Section \ref{sec:2}, we review the  CBO algorithm and give interpretation of average drift. In Section \ref{sec:3}, we introduce Ad-CBO algorithm and study the convergence and error estimates. In Section \ref{sec:4}, we provide several numerical simulations and compare the performance of Ad-CBO with the baseline CBO. Finally, Section \ref{sec:5} is devoted to a brief summary of this paper and some remaining issues for a future work. Additionally, Section \ref{apdx} provides an upper bound estimate for regret bound.\newline
	
	\noindent {\bf Notation}:~For two matrices $A=(a_{ij})$ and $B=(b_{ij})$ of the same size, we denote their Hadamard product by $A\odot B$:
	\[ A\odot B=(a_{ij}b_{ij}). \]
	For notation simplicity, we also use the following handy notation:
	\[ [N] := \{ 1, \cdots, N \}, \quad {\bf 1} = (1, \cdots, 1) \in \bbr^d.  \]
	Let ${\bx}=(x^1,\cdots,x^d)$ be a random vector whose component $x^i$ is a random variable on the same probability space $(\Omega, {\mathcal F}, {\mathbb P})$ for all $i\in[d]$. Then, we denote the expectation and variance like functionals for ${\bx}$ by
	\[\mathbb{E}{\bx}:=\mathbb{E}[{\bx}]:=\left(\mathbb{E}[x^1],\cdots,\mathbb{E}[x^d]\right),\quad\mbox{Var}[{\bx}]:=\mathbb{E}\big[\|{\bx}\|^2\big]-\big\|\mathbb{E}[{\bx}]\big\|^2,\]
	where $\|\cdot\|$ is the Euclidean norm.
	
	\section{Preliminaries}  \label{sec:2}
	\setcounter{equation}{0}
	In this section, we briefly review the consensus based optimization algorithm and give an intuition of average drift.
	Consider an optimization problem for a continuous objective function $L$ over the search space $\bbr^d$:
	\[ \mbox{minimize}~~L({\bx})~\mbox{over $\bbr^d$}. \]
	In other words, we look for a global minimum point ${\bx}_\sharp$ for $L$:
	\begin{align} \label{B-1}
		\displaystyle {\bx}_\sharp \in \mbox{argmin}_{{\bx} \in \bbr^d} L({\bx}).
	\end{align}
	The CBO algorithm \cite{P-T-T-M}  is a gradient free stochastic optimization algorithm for \eqref{B-1}, and  it contains two key features, stochastic aggregation and stochastic annealing \cite{K-G-V} from statistical physics. It is first proposed in deterministic setting \cite{AS-J}, and then the stochastic noise term is added later \cite{P-T-T-M} to avoid the confinement at saddle points of the object function to be minimized in a search space. More precisely, let ${\bx}_t^k \in \bbr^d$ be the spatial position of the $k$-th test particle {\color{black} at time $t$}, and we set
	\begin{align} \label{B-2}
		{\bx}_t^k = (x_t^{k,1}, \cdots, x_t^{k,d}), \qquad   {\bf M}_{\beta,t} =(M_{\beta,t}^1, \cdots, M_{\beta,t}^d)
		:= \frac{\sum_{i=1}^{N}e^{-\beta L({\bx}^i_t)}  {\bx}^i_t}{\sum_{i=1}^{N} e^{-\beta L({\bx}^i_t)}},
	\end{align}
	where $\beta$ is a positive constant. The spatial process ${\bx}_t^i$ is assumed to be governed by the system of the first-order stochastic ordinary differential equations:
	\begin{align} \label{B-3}
		d{\bx}^i_t = -\lambda ({\bx}^i_t - {\bf M}_{\beta,t})dt
		- \sigma({\bx}_t^i- {\bf M}_{\beta,t})\odot d{\bf W}_t^i, \quad t > 0, \quad  i \in [N],
	\end{align}
	where $M_{\beta,t}$ is the weighted average in \eqref{B-2}, $\lambda$ and $\sigma$ denote the drift rate and noise intensity, respectively. Here, ${\bf W}_t^i$$:=[W_t^{i,1},\cdots,W_t^{i,d}]^\top$ is a $d$-dimensional Brownian motion {\color{black}whose elements are defined on a probability space $(\Omega,\mathcal F,\{\mathcal F_t\}_{t\ge0},\mathbb P)$}, that is $\{W_t^{i,l}\}_{l=1}^d$ are independent one-dimensional Brownian motions, which satisfy mean zero and covariance relations: for any $i\in[N]$,
	\[ \bbe[W_t^{i,l}]= 0 \quad \mbox{for $l = 1, \cdots, d$} \quad  \mbox{and} \quad  \bbe[W_t^{i,l_1} W_t^{i,l_2}] = \delta_{l_1 l_2} t, \quad 1 \leq  l_1, l_2 \leq d. \]
	The stochastic CBO algorithm \eqref{B-3} asserts that under suitable framework, thanks to Laplace principle,
	\[  {\bx}_t^i \to  {\bx}_\sharp  \in \mbox{argmin}_{{\bx} \in \bbr^d} L({\bx}), \quad \mbox{as}~~t\to\infty\quad\mbox{and}\quad\beta \to \infty, \quad i \in [N].  \]
	The emergence of a global consensus to \eqref{B-3} is also provided in \cite{C-J-L-Z}, and \cite{HJK} for the case of ${\bf W}_t^i={\bf W}_t$. The latter one \cite{HJK}, i.e., under the identical noise, contains the rigorous proof of the asymptotic convergence to a global optimal solution in continuous and discrete spaces, respectively. Besides, many CBO algorithms were introduced: CBO with adaptive momentum in \cite{CJL}
	and the two-step CBO in \cite{BHKLMY}.
	CBO and its applications have demonstrated notable computational efficiency, particularly in problems arising from particle physics data, where traditional optimization methods encounter difficulties \cite{C-J-L-Z} .\newline
	
	{\color{black}In this paper, we propose the Ad-CBO algorithm, an extension of the orginal discrete stochastic CBO model, by introducing an additional average drift term.} The main limitation of CBO model without stochastic diffusion lies in the detected region by particles is contained in that of the previous time, i.e., for a solution $\{{\bx}_t^i\}_{i=1}^N$ to \eqref{B-3} with $\sigma=0$,
	\begin{align}\label{B-3-1}
		\mbox{Conv}\left(\left\{{\bx}_t^i\right\}_{i=1}^N\right)\subseteq\mbox{Conv}\left(\left\{{\bx}_s^i\right\}_{i=1}^N\right),\quad\forall~t\ge s.
	\end{align}
	This is because the attraction term in CBO model forces all particles to move toward the weighted average of the swarm, which lies in the convex hull generated by their locations. This implies the lack of opportunity of success detection when
	\begin{align}\label{B-4}
		\mbox{argmin}_{{\bx}\in\mathbb R^d}L({\bx})\notin\mbox{Conv}\left(\left\{{\bx}_0^i\right\}_{i=1}^N\right).
	\end{align}
	{\color{black}To address this issue, the stochastic CBO model \eqref{B-3} introduces a noise term $(\sigma>0)$ that allows particles to escape the convex hull and explore previously unreachable regions. However, the noise-induced exploration is undirected and may hinder convergence. If the noise level is too small, exploration remains limited. On the other hand, if it is too large, particles may fail to reach consensus. This may eventually lead to optimization failure.
		
		To improve exploration while preserving convergence, we introduce an average drift term into the dynamics:
		\begin{align*}
			d{\bx}^i_t = -\lambda_0({\bx}^i_t - {\bf M}_{\beta,t})dt-\lambda_1\left(\bar{\bx}_t-{\bf M}_{\beta,t}\right)dt- \sigma({\bx}_t^i- {\bf M}_{\beta,t})\odot d{\bf W}_t^i, \quad t > 0, \quad  i \in [N],
		\end{align*}
		where $\bar\bx_t$ is the ensemble mean of particles and ${\bf M}_{\beta,t}$ is the weighted average.
		
		The average drift term, $-\lambda_1(\bar\bx_t-{\bf M}_{\beta,t})$, uniformly shifts all particles in the same direction, pushing the entire swarm toward the weighted average. This acts as a global guidance mechanism, helping the ensemble to relocate toward more promising regions of the search space. Crucially, unlike random noise, the average drift is deterministic and directional, and since it applies equally to all particles, it does not disrupt consensus formation.
		
		The average drift is particularly beneficial when the optimal point lies outside the initial convex hull. It effectively moves the whole particle distribution toward the boundary, facilitating the detection of new regions beyond the initially explored domain. Moreover, because the drift magnitude depends on the distance between center of mass and the weighed average, the mechanism adapts dynamically. Once a particle identifies a promising area, the average drift helps guide the entire swarm toward it, promoting more detailed and focused exploration.
	}

	\section{Discrete CBO with Adaptive Average Drift and noise (Ad-CBO)}
	\label{sec:3}
	\setcounter{equation}{0}
	In this section, we propose the discrete CBO algorithm with adaptive average drift (Ad-CBO) and discuss its emergent behavior in consensus and pursue the optimal solution. For a given Lipschitz continuous objective function $L$, we look for a global optimal point ${\bx}_\sharp = (x_\sharp^1, \cdots, x_\sharp^d)  \in \bbr^d$ among sample points ${\bx}_n^i = (x_n^{i,1}, \cdots, x_n^{i,d}) \in \bbr^d$, which is the $i$-th sample point ($i\in[N]$) at time step $n$.
	
	First, we introduce the discrete Ad-CBO with noise. For a given time step $h$, let
	\[{\bf W}_{h,n}=(W_{h,n}^1,\cdots,W_{h,n}^d)\in\mathbb R^d,\quad\forall ~n\in\mathbb N\cup\{0\},\]
	be i.i.d random vectors whose elements $W_{h,n}^l$ are i.i.d random variables, defined on the common probability space $(\Omega,\mathcal F,\mathbb P)$, following
	\[W_{h,n}^l\sim\mathcal N(0,h),\quad\forall~n\in\mathbb N\cup\{0\},\quad\forall~l=1,\cdots,d.\]
	We denote the expectation in the given probability space by $\mathbb E$. Then, the proposed discrete stochastic Ad-CBO model is given as follows:
	\begin{align}\label{discrete}
		{\bx}_{n+1}^i={\bx}_n^i-\lambda_0h({\bx}_n^i-{\bf M}_{\beta,n})-\lambda_1h(\bar{\bx}_n-{\bf M}_{\beta,n})-\sigma({\bx}_n^i- {\bf M}_{\beta,n})\odot{\bf W}_{h,n},\quad n\ge0, \quad i \in[N].
	\end{align}
	We adopt the indistinguishable noise for each particle to be consistent with \cite{HJK}. Note that ${\bx}_k$ for $k\le n$ are independent of ${\bf W}_{h,n}$.
	
	\begin{theorem}[Emergence of a global consensus]\label{T3.3}
		For a solution process $\{{\bx}_n^i\}_{i=1}^N$ to \eqref{discrete}, we have
		\begin{align*}
			{
				\mathbb{E}\Big[\|{\bx}_n^i-{\bx}_n^j\|^2\Big]
				\le e^{-nh(2\lambda_0-\lambda_0^2h-\sigma^2)}
				\mathbb{E}\Big[\| {\bx}_0^i-{\bx}_0^j\|^2\Big],\quad n\in\mathbb{N}, \quad i, j \in [N].
			}
		\end{align*}
	\end{theorem}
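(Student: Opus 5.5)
Because the noise is shared by all particles and the average drift is the same for every particle, subtracting two copies of \eqref{discrete} should give a closed recursion for the difference ${\bx}_n^i-{\bx}_n^j$. In that recursion both ${\bf M}_{\beta,n}$ and $\bar{\bx}_n$ should cancel. The plan is to compute the one-step conditional second moment of this difference, use the independence of ${\bf W}_{h,n}$ from the past, and iterate.

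\textbf{Step 1 (difference equation).} Fix $i,j\in[N]$ and set ${\bf z}_n:={\bx}_n^i-{\bx}_n^j$. Subtracting the equations for $i$ and $j$ in \eqref{discrete}, the term $-\lambda_1h(\bar{\bx}_n-{\bf M}_{\beta,n})$ cancels exactly, and the weighted average drops out of the remaining terms. Since ${\bf W}_{h,n}$ is the same for all particles, we get
\begin{align*}
{\bf z}_{n+1}=(1-\lambda_0h){\bf z}_n-\sigma\,{\bf z}_n\odot{\bf W}_{h,n}.
\end{align*}
Componentwise, $z_{n+1}^l=z_n^l\big((1-\lambda_0h)-\sigma W_{h,n}^l\big)$ for $l=1,\cdots,d$.

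\textbf{Step 2 (one-step second moment).} Square each component and take expectations. Because ${\bf z}_n$ is a function of the initial data and ${\bf W}_{h,k}$ for $k<n$, it is independent of ${\bf W}_{h,n}$, so the expectation factorizes. Using $\mathbb E[W_{h,n}^l]=0$ and $\mathbb E[(W_{h,n}^l)^2]=h$, we obtain
\begin{align*}
\mathbb E\big[(z_{n+1}^l)^2\big]=\big((1-\lambda_0h)^2+\sigma^2h\big)\,\mathbb E\big[(z_n^l)^2\big].
\end{align*}
Summing over $l$ gives $\mathbb E\|{\bf z}_{n+1}\|^2=\big(1-h(2\lambda_0-\lambda_0^2h-\sigma^2)\big)\mathbb E\|{\bf z}_n\|^2$.

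\textbf{Step 3 (iteration).} Iterating from $0$ to $n$ gives the factor $\big(1-h(2\lambda_0-\lambda_0^2h-\sigma^2)\big)^n$. We then apply $1+x\le e^{x}$ with $x=-h(2\lambda_0-\lambda_0^2h-\sigma^2)$. The base $1+x$ is nonnegative, since it equals $(1-\lambda_0h)^2+\sigma^2h$, so raising both sides to the $n$-th power is legitimate. This yields $e^{-nh(2\lambda_0-\lambda_0^2h-\sigma^2)}$, which is the stated bound.

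No step is a real obstacle. The only points needing care are the independence of ${\bx}_n$ from ${\bf W}_{h,n}$, noted after \eqref{discrete}, which removes the cross term, and the nonnegativity of the base in Step 3. No sign condition on $2\lambda_0-\lambda_0^2h-\sigma^2$ is required, because the inequality $1+x\le e^x$ holds for all real $x$.
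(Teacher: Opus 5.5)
Your proposal is correct and follows the paper's proof essentially step for step. Both subtract the equations for particles $i$ and $j$ so that $\bar{\bx}_n$ and ${\bf M}_{\beta,n}$ cancel, use the independence of ${\bf W}_{h,n}$ from the past to obtain the one-step factor $(1-\lambda_0h)^2+\sigma^2h$, sum over $l$, and conclude with $1+x\le e^x$. Your remark that this base is nonnegative is a harmless extra check; the paper applies $1+x\le e^x$ to the one-step identity and iterates the resulting inequality, so it never needs that remark.
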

	\begin{proof} It follows from \eqref{discrete} that
		\begin{align}\label{T3.3P1}
			x_{n+1}^{i,l}-x_{n+1}^{j,l}=(1-\lambda_0h-\sigma W_{h,n}^l)(x_n^{i,l}-x_n^{j,l}).
		\end{align}
		We square and take the expectation on both sides to get
		\begin{align}\label{T1.2peq}
			\begin{aligned}
				\mathbb{E}\Big[(x_{n+1}^{i,l}-x_{n+1}^{j,l})^2\Big]&=\mathbb{E}\Big[(1-\lambda_0h-\sigma W_{h,n}^l)^2(x_n^{i,l}-x_n^{j,l})^2\Big]\\
				&=\mathbb{E}\Big[(1-\lambda_0h-\sigma W_{h,n}^l)^2\Big]\mathbb{E}\Big[(x_n^{i,l}-x_n^{j,l})^2\Big]\\
				&=\Big((1-\lambda_0h)^2+\sigma^2h\Big)\mathbb{E}\Big[(x_n^{i,l}-x_n^{j,l})^2\Big],
			\end{aligned}
		\end{align}
		where we have used the independence of $ W_{h,n}^l$ and $(x_n^{i,l}-x_n^{j,l})$. Now, we sum up \eqref{T1.2peq} over $l=1,\cdots,d$ to find
		\begin{align*}
			\mathbb{E}\Big[\|{\bx}_{n+1}^i-{\bx}_{n+1}^j\|^2\Big]=\Big(1-h(2\lambda_0-\lambda_0^2h-\sigma^2)\Big)\mathbb{E}\Big[\|{\bx}_n^i-{\bx}_n^j\|^2\Big].
		\end{align*}
		Next, we use the inequality $1+x\le e^x$ to obtain
		\begin{align*}
			\mathbb{E}\Big[\|{\bx}_{n+1}^i-{\bx}_{n+1}^j\|^2\Big]\le e^{-h(2\lambda_0-\lambda_0^2h-\sigma^2)}\mathbb{E}\Big[\|{\bx}_n^i-{\bx}_n^j\|^2\Big].
		\end{align*}
		This yields
		\[  \mathbb{E}\Big[\|{\bx}_n^i-{\bx}_n^j\|^2\Big]
		\le e^{-nh(2\lambda_0-\lambda_0^2h-\sigma^2)}
		\mathbb{E}\Big[\|{\bx}_0^i-{\bx}_0^j\|^2\Big],\quad n\in\mathbb{N}. \]
	\end{proof}
	
	\begin{lemma}\label{lem_ele}
		For a solution process $\left\{{\bx}_n^i\right\}_{i=1}^N$ to \eqref{discrete}, we have the following two assertions:
		\begin{align*}
			&(i)~x_n^{i,l}-\bar{x}_n^l=\prod_{k=0}^{n-1}\left(1-\lambda_0h-\sigma W_{h,k}^l\right)(x_0^{i,l}-\bar{x}_0^l),\quad\forall~l=1,\cdots,d,~~\forall~i\in[N].\\
			&(ii)~\max_{i,j\in[N]}(x_n^{i,l}-x_n^{j,l})=\prod_{k=0}^{n-1}\left(1-\lambda_0h-\sigma W_{h,k}^l\right)\max_{i,j\in[N]}(x_0^{i,l}-x_0^{j,l}),\quad\forall~l=1,\cdots,d.
		\end{align*}
	\end{lemma}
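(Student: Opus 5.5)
The plan is to read off linear recursions for differences of particles, in the same way as \eqref{T3.3P1} in the proof of Theorem \ref{T3.3}. The key observation is that both the weighted average ${\bf M}_{\beta,n}$ and the average-drift term $-\lambda_1h(\bar{\bx}_n-{\bf M}_{\beta,n})$ are the same for every particle. They therefore cancel in any difference of two particles, and also in the difference between a particle and the ensemble mean. Because the noise ${\bf W}_{h,n}$ is indistinguishable (shared by all particles), the multiplicative noise factor is common as well. Each coordinate difference is then multiplied at each step by the scalar $1-\lambda_0h-\sigma W_{h,n}^l$.

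For (i), I will average \eqref{discrete} over $i\in[N]$. The ensemble mean then satisfies
\[
\bar{\bx}_{n+1}=\bar{\bx}_n-\lambda_0h(\bar{\bx}_n-{\bf M}_{\beta,n})-\lambda_1h(\bar{\bx}_n-{\bf M}_{\beta,n})-\sigma(\bar{\bx}_n-{\bf M}_{\beta,n})\odot{\bf W}_{h,n}.
\]
Subtracting this from the equation for ${\bx}_{n+1}^i$ cancels the $\lambda_1$ term and all occurrences of ${\bf M}_{\beta,n}$. In the $l$-th component this leaves
\[
x_{n+1}^{i,l}-\bar x_{n+1}^l=(1-\lambda_0h-\sigma W_{h,n}^l)(x_n^{i,l}-\bar x_n^l).
\]
A straightforward induction on $n$ then gives the product formula, with the empty product equal to $1$ at $n=0$.

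For (ii), I start from \eqref{T3.3P1}, which gives $x_n^{i,l}-x_n^{j,l}=\Pi_n^l\,(x_0^{i,l}-x_0^{j,l})$ by iteration, where $\Pi_n^l:=\prod_{k=0}^{n-1}(1-\lambda_0h-\sigma W_{h,k}^l)$. The only remaining step is to take the maximum over pairs $(i,j)$. Since $\Pi_n^l$ does not depend on $(i,j)$, it factors out when $\Pi_n^l\ge0$.

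The main obstacle is the sign of $\Pi_n^l$. The Gaussian increments can make a factor $1-\lambda_0h-\sigma W_{h,k}^l$ negative, in which case $\max_{i,j}(\Pi_n^l a_{ij})=\Pi_n^l\min_{i,j}a_{ij}$. To handle this I will use the antisymmetry $a_{ji}=-a_{ij}$ of the differences $a_{ij}:=x_0^{i,l}-x_0^{j,l}$, which gives $\min_{i,j}a_{ij}=-\max_{i,j}a_{ij}$ and $\max_{i,j}a_{ij}\ge0$. Combining the two signs, one gets in all cases
\[
\max_{i,j}(x_n^{i,l}-x_n^{j,l})=|\Pi_n^l|\max_{i,j}(x_0^{i,l}-x_0^{j,l}).
\]
This coincides with the stated identity on the event $\{\Pi_n^l\ge0\}$, for example when $\sigma=0$ and $\lambda_0h\le1$, or when the noise realizations keep each factor positive. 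In general, therefore, (ii) must be read with $|\Pi_n^l|$ in place of $\Pi_n^l$, because the left-hand side is nonnegative. I would flag this and state (ii) in that form.
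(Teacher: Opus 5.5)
Part (i) is exactly the paper's argument: average \eqref{discrete}, subtract so that the $\lambda_1$-term and ${\bf M}_{\beta,n}$ cancel, and iterate the scalar recursion.

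For (ii) you follow the same route as the paper, but you are right and the paper is not. Its proof pulls the factor $1-\lambda_0h-\sigma W_{h,n-1}^l$ out of $\max_{i,j}$ without any sign check. The left-hand side $\max_{i,j}(x_n^{i,l}-x_n^{j,l})$ is always $\ge0$. The stated right-hand side, however, is strictly negative whenever $\Pi_n^l<0$ and the initial spread in coordinate $l$ is positive. This happens with positive probability as soon as $\sigma>0$ and $n\ge1$, because the factors are Gaussian and independent of the initial data. So (ii) as written is false, and your antisymmetry argument giving $|\Pi_n^l|$ is the correct identity.

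The correction is harmless for the rest of the paper. The only later use of (ii) is in \eqref{aa5}, which needs only the squared identity $\max_{i,j}(x_n^{i,l}-x_n^{j,l})^2=(\Pi_n^l)^2\max_{i,j}(x_0^{i,l}-x_0^{j,l})^2$, and this follows from your version.
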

	\begin{proof}
		(i)~It follows from \eqref{discrete} that
		\begin{align*}
			\bar{\bx}_{n+1}&=\bar{\bx}_n-\lambda_0h(\bar{\bx}_n-{\bf M}_{\beta,n})-\lambda_1h(\bar{\bx}_n-{\bf M}_{\beta,n})-\sigma(\bar{\bx}_n-{\bf M}_{\beta,n})\odot{\bf W}_{h,n}.
		\end{align*}
		We subtract the above relation from \eqref{discrete} and consider $l$th element ($l=1,\cdots,d$),
		\begin{align*}
			x_{n+1}^{i,l}-\bar{x}_{n+1}^l&=x_n^{i,l}-\bar{x}_n^l-\lambda_0h(x_n^{i,l}-\bar{x}_n^l)-\sigma(x_n^{i,l}-\bar{x}_n^l)W_{h,n}^l\\
			&=\left(1-\lambda_0h-\sigma W_{h,n}^l\right)(x_n^{i,l}-\bar{x}_n^l).
		\end{align*}
		Sequentially, we can get the desired estimate.\newline
		
		\noindent(ii)~From \eqref{T3.3P1}, we have
		\begin{align*}
			\max_{i,j\in[N]}(x_n^{i,l}-x_n^{j,l})&=\max_{i,j\in[N]}\left(1-\lambda_0h-\sigma W_{h,n-1}^l\right)(x_{n-1}^{i,l}-x_{n-1}^{j,l})\\
			&=\left(1-\lambda_0h-\sigma W_{h,n-1}^l\right)\max_{i,j\in[N]}(x_{n-1}^{i,l}-x_{n-1}^{j,l}).
		\end{align*}
		Again sequentially, we obtained the desired result.
	\end{proof}
	\begin{lemma}\label{lem_exp}
		Suppose that
		\begin{align}\label{cond_exp}
			0<h,\quad0<\lambda_0h<1\quad\mbox{and}\quad\frac{1-\lambda_0h}{y^*\sqrt{h}}>\sigma,
		\end{align}
		where $y^*=y^*(\lambda_0h)$ satisfies
		\[\frac{1}{y^*}\sqrt{\frac{\pi}{2}}\exp\left(-\frac{(y^*)^2}{2}\right)+\mbox{erf}\left(\frac{y^*}{\sqrt{2}}\right)-\frac{1}{1-\lambda_0h}=0.\]
		Then, for a Gaussian random variable $\eta_h\sim\mathcal N(0,h)$, we have
		\begin{align*}
			\mathbb E\big|1-\lambda_0h+\sigma\eta_h\big|<1.
		\end{align*}
	\end{lemma}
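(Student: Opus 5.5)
The plan is to compute $\mathbb E|1-\lambda_0h+\sigma\eta_h|$ in closed form and reduce the claim to a monotonicity statement about a single-variable function. Write $a:=1-\lambda_0h\in(0,1)$ (using $0<\lambda_0h<1$). Write $\eta_h=\sqrt h\,Z$ with $Z\sim\mathcal N(0,1)$, and set $c:=\sigma\sqrt h$. We may assume $\sigma>0$, since for $\sigma=0$ the expectation equals $a<1$ directly. Let $\varphi$ and $\Phi$ denote the standard normal density and distribution function. The standard folded-normal formula gives
\begin{align*}
\mathbb E|a+cZ| = a\bigl(2\Phi(a/c)-1\bigr)+2c\,\varphi(a/c)
= a\Bigl[\mbox{erf}\Bigl(\frac{y}{\sqrt2}\Bigr)+\frac{1}{y}\sqrt{\frac{2}{\pi}}\,e^{-y^2/2}\Bigr],\qquad y:=\frac{a}{c}=\frac{1-\lambda_0h}{\sigma\sqrt h}.
\end{align*}
To derive it, I would split the integral at $Z=-a/c$, use $\int_{-y}^\infty z\varphi(z)\,dz=\varphi(y)$, and use $2\Phi(y)-1=\mbox{erf}(y/\sqrt2)$.

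Define $g(y):=\mbox{erf}(y/\sqrt2)+\frac{2\varphi(y)}{y}$ for $y>0$. The target inequality $\mathbb E|a+cZ|<1$ is then equivalent to $g(y)<1/a$. The function $g$ has the following properties:
\begin{itemize}
\item It is strictly decreasing. Since $\varphi'(y)=-y\varphi(y)$, we get $g'(y)=2\varphi(y)-\frac{2\varphi(y)}{y^2}-2\varphi(y)=-\frac{2\varphi(y)}{y^2}<0$.
\item $g(y)\to\infty$ as $y\to0^+$.
\item $g(y)\to1$ as $y\to\infty$.
\end{itemize}
Because $1/a>1$, there is therefore a unique $y^*>0$ with $g(y^*)=1/a$. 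This is the $y^*$ characterized in the statement.

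The hypothesis $\frac{1-\lambda_0h}{y^*\sqrt h}>\sigma$ says exactly that $y>y^*$. Strict monotonicity then gives $g(y)<g(y^*)=\frac{1}{1-\lambda_0h}$, and multiplying by $a$ yields $\mathbb E|1-\lambda_0h+\sigma\eta_h|<1$.

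The only delicate point is bookkeeping of the constant in front of $\frac1{y}e^{-y^2/2}$. My computation produces $\sqrt{2/\pi}$, whereas the defining equation of $y^*$ in the statement is printed with $\sqrt{\pi/2}$. I would carry out the Gaussian integral carefully and read $y^*$ as the root of the correctly normalized $g$. The structural argument (strict decrease, limits $\infty$ and $1$, unique root) holds for any positive constant in that slot, so only the numerical value of $y^*$ depends on it. Everything else is routine.
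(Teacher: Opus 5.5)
Your proposal is correct and follows essentially the paper's proof: closed-form folded-normal mean, reduction to $g(y)<1/(1-\lambda_0h)$ for a strictly decreasing $g$ with limits $+\infty$ at $0^+$ and $1$ at $\infty$, and reading the hypothesis as $y=\frac{1-\lambda_0h}{\sigma\sqrt h}>y^*$.

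You are also right that the correct constant is $\sqrt{2/\pi}$; the paper's own proof writes $\sigma\sqrt{\pi h/2}$ in the folded-normal formula. However, you do not need to reinterpret $y^*$. Because $\sqrt{\pi/2}>\sqrt{2/\pi}$, the printed function $\tilde g(y)=\mbox{erf}(y/\sqrt2)+\frac1y\sqrt{\pi/2}\,e^{-y^2/2}$ is still strictly decreasing and dominates your $g$ pointwise. So for the printed root $\tilde y^*$, the hypothesis $y>\tilde y^*$ gives $g(y)<\tilde g(y)<\tilde g(\tilde y^*)=\frac{1}{1-\lambda_0h}$, and the lemma holds exactly as stated. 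One small correction: your aside that any positive constant would work is too broad, since for constants below $\sqrt{2/\pi}$ the function is no longer monotone.
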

	\begin{proof}
		First, note that
		\[1-\lambda_0h+\sigma\eta_h\sim\mathcal N(1-\lambda_0h,\sigma^2h).\]
		By the folded normal distribution, we have
		
		\begin{align*}
			\mathbb E\big|1-\lambda_0h+\sigma\eta_h\big|=\sigma\sqrt{\frac{\pi h}{2}}\exp\left(-\frac{1}{2}\left(\frac{1-\lambda_0h}{\sigma\sqrt{h}}\right)^2\right)-(1-\lambda_0h)\mbox{erf}\left(-\frac{1-\lambda_0h}{\sigma\sqrt{2h}}\right),
		\end{align*}
		i.e., we need to show
		\begin{align}\label{bb1}
			\sigma\sqrt{\frac{\pi h}{2}}\exp\left(-\frac{1}{2}\left(\frac{1-\lambda_0h}{\sigma\sqrt{h}}\right)^2\right)-(1-\lambda_0h)\mbox{erf}\left(-\frac{1-\lambda_0h}{\sigma\sqrt{2h}}\right)<1.
		\end{align}
		where {\color{black}$\mbox{erf}(\cdot)$ is the Gauss error function.}
		
		{\color{black}To show \eqref{bb1},} we use the notation
		\[y=\frac{1-\lambda_0h}{\sigma\sqrt{h}}>0,\]
		to find that the relation \eqref{bb1} is equivalent to
		\begin{align}\label{bb2}
			\sqrt{\frac{\pi}{2}}\exp\left(-\frac{y^2}{2}\right)-y\mbox{erf}\left(-\frac{y}{\sqrt{2}}\right)-\frac{y}{1-\lambda_0h}<0.
		\end{align}
		Define a function $g_{\lambda_0h}:{\color{black}(0,\infty)}\to\mathbb R$ by
		\begin{align*}
			g_{\lambda_0h}(y):=\frac{1}{y}\sqrt{\frac{\pi}{2}}\exp\left(-\frac{y^2}{2}\right)-\mbox{erf}\left(-\frac{y}{\sqrt{2}}\right)-\frac{1}{1-\lambda_0h},
		\end{align*}
		to rewrite the desired condition \eqref{bb2} as
		\begin{align*}
			g_{\lambda_0h}(y)<0.
		\end{align*}
		\noindent One can easily check that $g_{\lambda_0h}$ is monotone decreasing on {\color{black}$(0,\infty)$} and
		\[\lim_{y\to0+}g_{\lambda_0h}(y)=+\infty,\quad{\color{black}\lim_{y\to\infty}g_{\lambda_0h}(y)=1-\frac{1}{1-\lambda_h}<0.}\]
		This implies that the existence of the unique zero $y^*(\lambda_0h)>0$ of $g_{\lambda_0h}$, i.e., ${\color{black}g_{\lambda_0h}}\big(y^*(\lambda_0h)\big)=0$,
		and the conditions for parameters show
		\[{\color{black}g_{\lambda_0h}}\left(\frac{1-\lambda_0h}{\sigma\sqrt{h}}\right)<{\color{black}g_{\lambda_0h}}\big(y^*(\lambda_0h)\big)=0.\]
		Therefore, the relation \eqref{bb1} holds and we get the desired estimate.
	\end{proof}
	\begin{remark}
		\begin{enumerate}
			\item By conditions in Lemma \ref{lem_exp}, 
			the point is $\lambda_0$ and $\sigma$ have a positive correlation. For a fixed time step $h>0$, as $\lambda_0$ gets {\color{black}smaller}, i.e., $1-\lambda_0h$ gets {\color{black}larger}, the zeros $y^*(\lambda_0h)$ gets {\color{black}larger} due to structure of $g_{\lambda_0h}$ in the proof. This gives a {\color{black}smaller} upper bound condition for $\sigma$ in \eqref{cond_exp}. This relation seems quite clear considering the result of Lemma \ref{lem_exp}. For a Gaussian random variable $x\sim\mathcal N(\mu,s^2)$ with $\mu>0$, the folded normal distribution, i.e. a distribution for $|x|$, {\color{black}can have a small expectation when the mean $\mu$ and standard deviation $s$ are suitably balanced: larger $\mu$ requires smaller $s$, while smaller $\mu$ allows larger $s$. This comes from the fact that $\mathbb E[|x|]$ is monotone increasing in $\mu$ and $s$, respectively.}
			\item The conditions \eqref{cond_exp} is a sharp estimate, which can be confirmed from the definition of $y^*$ and $g_{\lambda_0h}$ in the proof of Lemma \ref{lem_exp}. Here, we give an explicit example for this condition: Suppose that $\lambda_0h=0.15$. Then, we have
			\[g_{\lambda_0h}(\sqrt{2})<\frac{\sqrt{\pi}}{2e}+0.85-\frac{1}{1-\lambda_0h}<0,\]
			where we used $\mbox{erf}(1)<0.85$. {\color{black}This implies $y_*(\lambda_0h)<\sqrt{2}$. Now, we choose $\sigma$ and $h$ to satisfy
				\[\sigma\sqrt{h}<\frac{1-\lambda_0h}{\sqrt{2}}=\frac{0.85}{\sqrt{2}}\approx0.601,\]
				so that $\lambda_0$ is decided and} the condition \eqref{cond_exp} holds.
		\end{enumerate}
	\end{remark}
	\begin{lemma}\label{lem_init}
		For any set of positive numbers $\left\{s_l\right\}_{l=1}^d$, suppose that initial data $\left\{{\bx}_0^i\right\}_{i=0}^N$ are i.i.d random vectors, whose elements ${\bx}_0^i=(x_0^1,\cdots,x_0^d)$ are also i.i.d Gaussian random variables such that
		\begin{align}\label{cond_init}
			x_0^l\sim\mathcal N(0,s_l^2),\quad\forall~l=1,\cdots,d.
		\end{align}
		Then, for a solution process $\left\{{\bx}_n^i\right\}_{i=1}^N$ to \eqref{discrete}, there exists a positive constant $\tilde C=\tilde C(N,s_1,\cdots,s_d)$ such that
		\begin{align*}
			&(i)~\mathbb E\left[\max_{i\in[N]}\sum_{l=1}^d|x_0^{i,l}-\bar{x}_0^l|\right]<\tilde C.\\
			&(ii)~\sum_{l=1}^d\mathbb E\left[(x_0^{i,l}-\bar{x}_0^l)^2\right]<\tilde C,\quad\forall~i\in[N].\\
			&(iii)~\sum_{l=1}^d\mathbb E\left[\max_{i,j\in[N]}(x_0^{i,l}-x_0^{j,l})^2\right]<\tilde C.
		\end{align*}
	\end{lemma}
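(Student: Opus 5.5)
The three quantities in Lemma \ref{lem_init} involve only the initial data, so the dynamics \eqref{discrete} play no role. Each bound reduces to finite moments of finitely many Gaussian variables. I will work coordinatewise and use that, for fixed $l$, the variables $x_0^{1,l},\dots,x_0^{N,l}$ are i.i.d.\ $\mathcal N(0,s_l^2)$. All constants will depend only on $N$ and $s_1,\dots,s_d$, and I will take $\tilde C$ to be one plus the maximum of the three bounds.

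\emph{Step for (ii).} Write $x_0^{i,l}-\bar x_0^l=(1-\tfrac1N)x_0^{i,l}-\tfrac1N\sum_{j\ne i}x_0^{j,l}$. This is a centered Gaussian with variance
\[
\Big((1-\tfrac1N)^2+\tfrac{N-1}{N^2}\Big)s_l^2=\tfrac{N-1}{N}s_l^2 .
\]
Hence
\[
\sum_{l=1}^d\mathbb E\big[(x_0^{i,l}-\bar x_0^l)^2\big]=\tfrac{N-1}{N}\sum_{l=1}^d s_l^2 ,
\]
which is finite and independent of $i$.

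\emph{Step for (i).} Bound the maximum by the sum:
\[
\max_{i\in[N]}\sum_l|x_0^{i,l}-\bar x_0^l|\le \sum_{i=1}^N\sum_{l=1}^d|x_0^{i,l}-\bar x_0^l| .
\]
Take expectations and use the folded normal mean $\mathbb E|Z|=\sqrt{2/\pi}\,\tau$ for $Z\sim\mathcal N(0,\tau^2)$. With the variance computed above, this gives the bound $N\sqrt{2(N-1)/(\pi N)}\sum_l s_l$. Alternatively, Cauchy--Schwarz together with (ii) gives the same conclusion.

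\emph{Step for (iii).} Again replace the maximum over pairs by the sum over pairs. Each difference $x_0^{i,l}-x_0^{j,l}$ with $i\neq j$ is $\mathcal N(0,2s_l^2)$, so
\[
\mathbb E\Big[\max_{i,j}(x_0^{i,l}-x_0^{j,l})^2\Big]\le \sum_{i,j}\mathbb E\big[(x_0^{i,l}-x_0^{j,l})^2\big]=2N(N-1)s_l^2 .
\]
Summing over $l$ finishes this step.

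I expect no real obstacle. The only subtle point is that the coordinates of different particles are independent, which is what makes the variance computation in (ii) valid. This independence is implicit in the hypothesis that the initial data are i.i.d.\ random vectors with independent Gaussian entries. The crude max-by-sum bounds suffice because the statement only asks for some finite constant depending on $N$ and the $s_l$.
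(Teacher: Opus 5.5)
Your proof is correct. For (ii) it matches the paper; your decomposition $x_0^{i,l}-\bar x_0^l=(1-\frac1N)x_0^{i,l}-\frac1N\sum_{j\ne i}x_0^{j,l}$ is simply a quicker way to the same variance $\frac{N-1}{N}s_l^2$ than the paper's covariance sum.

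For (i) and (iii) you take a cruder but different route. You bound the maximum directly by the sum, whereas the paper first squares, using $\varphi(\mathbb E[\max_i X_i])\le\sum_i\mathbb E[\varphi(X_i)]$ with $\varphi(x)=x^2$. In (i) this requires the second moment of $U_{N,i}=\sum_l|x_0^{i,l}-\bar x_0^l|$, which the paper computes via the folded normal law and independence across coordinates, and it yields $\sqrt{N-1}\sum_l s_l$. In (iii) the paper reduces to single particles via $|x_0^{i,l}-x_0^{j,l}|\le 2\max_k|x_0^{k,l}|$ and uses the Gaussian fourth moment $3s_l^4$, yielding $4\sqrt{3N}\sum_l s_l^2$.

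Your argument is more elementary. It uses only linearity of expectation and the marginal Gaussian laws, so independence across coordinates $l$ is never needed. Defining $\tilde C$ as one plus the maximum also cleanly gives the strict inequalities.

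The price is the $N$-dependence: your constants grow like $N$ in (i) and like $N^2$ in (iii), against $\sqrt N$ in the paper. This is harmless for the lemma, which only asserts the existence of some $\tilde C(N,s_1,\dots,s_d)$. However, $\tilde C$ feeds into Corollary \ref{cor_AB} and into the smallness hypothesis (ii) of Theorem \ref{T3.7}. A smaller constant therefore makes that hypothesis less restrictive when $N$ is large, and standard Gaussian maximal inequalities would bring the growth down further, to logarithmic in $N$.
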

	\begin{proof}
		(i)~Since this estimate depends on $N$, we use the notation with $N$ as follows:
		\begin{align*}
			u_{N,i,l}:=x_0^{i,l}-\bar{x}_0^l\quad\mbox{and}\quad U_{N,i}:=\sum_{l=1}^d|u_{N,i,l}|.
		\end{align*}
		So, we have
		\begin{align*}
			\mathbb E\left[\max_{i\in[N]}\sum_{l=1}^d|x_0^{i,l}-\bar{x}_0^l|\right]=\mathbb E\left[\max_{i\in[N]}U_{N,i}\right]=\mathbb E\left[\max_{i\in[N]}\sum_{l=1}^d|u_{N,i,l}|\right].
		\end{align*}
		Note that $\left\{u_{N,i,l}\right\}_{l=1}^d$ are independent. We explore the distribution of $u_{N,i,l}$, $|u_{N,i,l}|$ and $U_{N,i}$ one by one.\newline
		
		\noindent$\bullet$ (Distribution of $u_{N,i,l}$)~By the linearity of expectation, we have
		\begin{align*}
			\mathbb E[u_{N,i,l}]&=\mathbb E\left[\frac{1}{N}\sum_{j=1}^N(x_0^{i,l}-x_0^{j,l})\right]=\frac{1}{N}\sum_{j=1}^N\left(\mathbb E[x_0^{i,l}]-\mathbb E[x_0^{j,l}]\right)=0.
		\end{align*}
		For the variance, we have
		\[x_0^{i,l}-x_0^{j,l}\sim\mathcal N(0,2s_l^2),\quad\mbox{if}~~i\ne j,\]
		and
		\[\mbox{Cov}\Big(x_0^{i,l}-x_0^{j_1,l},x_0^{i,l}-x_0^{j_2,l}\Big)=s_l^2,\quad\mbox{if}~~j_1\ne j_2\in[N]\setminus\left\{i\right\},\]
		to derive
		\begin{align*}
			\mbox{Var}(u_{N,i,l})&=\mbox{Var}\left(\frac{1}{N}\sum_{j=1}^N(x_0^{i,l}-x_0^{j,l})\right)=\frac{1}{N^2}\mbox{Var}\left(\sum_{j=1}^N(x_0^{i,l}-x_0^{j,l})\right)\\
			&=\frac{1}{N^2}\left(\sum_{j=1}^N\mbox{Var}\Big(x_0^{i,l}-x_0^{j,l}\Big)+2\sum_{1\le j_1<j_2\le N}\mbox{Cov}\Big(x_0^{i,l}-x_0^{j_1,l},x_0^{i,l}-x_0^{j_2,l}\Big)\right)\\
			&=\frac{1}{N^2}\Big(2(N-1)s_l^2+(N-2)(N-1)s_l^2\Big)=\frac{N-1}{N}s_l^2.
		\end{align*}
		Since the summation of Gaussian random variables is Gaussian, one gets
		\begin{align}\label{cc1}
			u_{N,i,l}\sim\mathcal N\left(0,\frac{N-1}{N}s_l^2\right).
		\end{align}\newline
		
		\noindent$\bullet$ (Distribution of $|u_{N,i,l}|$)~We use the folded normal distribution to have
		\begin{align}\label{cc2}
			\mathbb E\big[|u_{N,i,l}|\big]=s_l\sqrt{\frac{N-1}{N}}\sqrt{\frac{2}{\pi}},\quad\mbox{Var}\big(|u_{N,i,l}|\big)=\frac{N-1}{N}s_l^2\left(1-\frac{2}{\pi}\right).
		\end{align}\newline
		
		\noindent$\bullet$ (Distribution of $U_{N,i}$)~We use the independence of $\left\{|u_{N,i,l}|\right\}_{l=1}^d$ and \eqref{cc2} to find
		\begin{align*}
			\mathbb E[U_{N,i}]&=\mathbb E\left[\sum_{l=1}^d|u_{N,i,l}|\right]=\sqrt{\frac{N-1}{N}}\sqrt{\frac{2}{\pi}}\sum_{l=1}^ds_l,\\
			\mbox{Var}(U_{N,i})&=\mbox{Var}\left(\sum_{l=1}^d|u_{N,i,l}|\right)=\sum_{l=1}^d\mbox{Var}\big(|u_{N,i,l}|\big)=\frac{N-1}{N}\left(1-\frac{2}{\pi}\right)\sum_{l=1}^ds_l^2.
		\end{align*}
		On the other hand, for any nondecreasing convex function $\varphi:\mathbb R\to\mathbb R$ and random variables $X_i$ for $i\in[N]$, Jensen's inequality shows
		\begin{align}\label{cc3}
			\varphi\left(\mathbb E\left[\max_{i\in[N]}X_i\right]\right)&\le\mathbb E\left[\max_{i\in[N]}\varphi(X_i)\right]\le\sum_{i=1}^N\mathbb E\big[\varphi(X_i)\big].
		\end{align}
		With $\phi(x)=x^2$ and $X_i=U_{N,i}\ge0$ in \eqref{cc3}, one can get
		\begin{align*}
			\mathbb E\left[\max_{i\in[N]}U_{N,i}\right]&\le\sqrt{\sum_{i=1}^N\mathbb E\left[(U_{N,i})^2\right]}\le\sqrt{N-1}\cdot\Bigg|\sum_{l=1}^ds_l\Bigg|.
		\end{align*}\newline
		
		\noindent(ii)~We use \eqref{cc1} to find
		\begin{align*}
			\sum_{l=1}^d\mathbb E\left[(x_0^{i,l}-\bar{x}_0^l)^2\right]&=\sum_{l=1}^d\mathbb E[u_{N,i,l}^2]=\frac{N-1}{N}\sum_{l=1}^ds_l^2.
		\end{align*}\newline
		
		\noindent(iii)~Note that
		\[\mathbb E\left[\max_{i,j\in[N]}(x_0^{i,l}-x_0^{j,l})^2\right]\le\mathbb E\left[\max_{i\in[N]}(2x_0^{i,l})^2\right]=4\mathbb E\left[\max_{i\in[N]}(x_0^{i,l})^2\right].\]
		We set $\varphi(x)=x^2$ and $X_i=(x_0^{i,l})^2$ in \eqref{cc3} to derive
		\begin{align*}
			\mathbb E\left[\max_{i\in[N]}(x_0^{i,l})^2\right]\le\sqrt{\sum_{i=1}^N\mathbb E\left[(x_0^{i,l})^4\right]}=\sqrt{3Ns_l^4}=s_l^2\sqrt{3N}.
		\end{align*}
		Hence, one gets
		\begin{align*}
			\sum_{l=1}^d\mathbb E\left[\max_{i,j\in[N]}(x_0^{i,l}-x_0^{j,l})^2\right]&\le4\sqrt{3N}\sum_{l=1}^ds_l^2.
		\end{align*}\newline
		
		\noindent Now, we set
		\[\tilde C(N,s_1,\cdots,s_d):=4\sqrt{3N}\max\left\{\Bigg|\sum_{l=1}^ds_l\Bigg|,~\sum_{l=1}^ds_l^2\right\},\]
		we get the desired constant.
	\end{proof}
	For now on, we set
	\[A_n:=\sum_{p=0}^n\frac{1}{N}\sum_{i=1}^N\|{\bx}_p^i-{\bf M}_{\beta,p}\|,\quad B_n:=\sum_{p=0}^n\frac{1}{N}\sum_{i=1}^N\|({\bx}_p^i-{\bf M}_{\beta,p})\odot{\bf W}_{h,p}\|.\]
	\begin{corollary}\label{cor_AB}
		Suppose that system parameters satisfy \eqref{cond_exp}, \eqref{cond_init} and
		\begin{align}\label{cond_para}
			\Big((1-\lambda_0h)^2+\sigma^2\Big)<1.
		\end{align}
		Then, for a solution process $\left\{{\bx}_n^i\right\}_{i=1}^N$, there exists a positive constant
		\[C=C(N,s_1,\cdots,s_d,h,\lambda_0,h,\sigma)\]
		such that
		\[\mathbb E\left[A_n\right]\le C\quad\mbox{and}\quad\mathbb E\left[B_n\right]\le \sqrt{h}C,\quad\forall~n\in\mathbb N.\]
	\end{corollary}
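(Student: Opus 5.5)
The plan is to reduce everything to the coordinatewise diameters
\[
D_p^l:=\max_{i,j\in[N]}(x_p^{i,l}-x_p^{j,l})=\max_{i,j\in[N]}|x_p^{i,l}-x_p^{j,l}|,
\]
whose evolution is explicit by Lemma \ref{lem_ele}(ii). The two expressions for $D_p^l$ agree because the maximum is symmetric in $(i,j)$.

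\textbf{Step 1: reduction to diameters.} Since ${\bf M}_{\beta,p}$ is a convex combination of $\{{\bx}_p^j\}_{j=1}^N$ with positive weights, for every $i$ and $l$ we have $|x_p^{i,l}-M_{\beta,p}^l|\le D_p^l$. Hence
\[
\|{\bx}_p^i-{\bf M}_{\beta,p}\|\le\sum_{l=1}^d D_p^l,\qquad \|({\bx}_p^i-{\bf M}_{\beta,p})\odot{\bf W}_{h,p}\|\le\sum_{l=1}^d D_p^l\,|W_{h,p}^l|.
\]
Both bounds are independent of $i$, so the average $\frac1N\sum_i$ is harmless.

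\textbf{Step 2: geometric decay of the diameters.} By Lemma \ref{lem_ele}(ii),
\[
D_p^l=\Big|\prod_{k=0}^{p-1}(1-\lambda_0h-\sigma W_{h,k}^l)\Big|\,D_0^l.
\]
The factors are independent of each other and of the initial data, which is independent of the noise. Taking expectations therefore gives
\[
\mathbb E[D_p^l]=r^p\,\mathbb E[D_0^l],\qquad r:=\mathbb E|1-\lambda_0h-\sigma\eta_h|.
\]
Since $\eta_h$ and $-\eta_h$ have the same law, Lemma \ref{lem_exp} under \eqref{cond_exp} gives $r<1$. This is the one place where the hypothesis is essential; summing in $p$ then yields a geometric series.

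\textbf{Step 3: control of the initial diameter.} By Cauchy--Schwarz and Lemma \ref{lem_init}(iii),
\[
\sum_l\mathbb E[D_0^l]\le\sqrt d\Big(\sum_l\mathbb E[(D_0^l)^2]\Big)^{1/2}\le\sqrt{d\tilde C}.
\]
Therefore
\[
\mathbb E[A_n]\le\sum_{p=0}^n r^p\sqrt{d\tilde C}\le\frac{\sqrt{d\tilde C}}{1-r}.
\]

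\textbf{Step 4: the noise-weighted sum $B_n$.} Here ${\bf W}_{h,p}$ is independent of $D_p^l$, since $D_p^l$ depends only on ${\bx}_0$ and ${\bf W}_{h,0},\dots,{\bf W}_{h,p-1}$. Hence
\[
\mathbb E[D_p^l|W_{h,p}^l|]=\mathbb E[D_p^l]\,\mathbb E|W_{h,p}^l|=\sqrt{2h/\pi}\;r^p\,\mathbb E[D_0^l].
\]
Summing over $p$ and $l$ gives $\mathbb E[B_n]\le\sqrt h\,\sqrt{2/\pi}\,\sqrt{d\tilde C}/(1-r)$. Taking
\[
C:=\frac{\sqrt{d\tilde C}}{1-r}
\]
proves both claims, and $C$ depends only on $N,s_l,\lambda_0,h,\sigma$ (and $d$).

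\textbf{Main obstacle.} The delicate step is Step 2: one must justify the independence structure that makes the expectation of the product factorize. The key facts are that $W_{h,k}^l$ for different $k$ are independent and independent of the initial data. Lemma \ref{lem_exp} controls only the first absolute moment, which is exactly why I work with $\mathbb E|\cdot|$ rather than squares. Condition \eqref{cond_para} is not needed on this route. If one preferred a second-moment argument, in the spirit of Theorem \ref{T3.3}, then \eqref{cond_para} would supply the contraction factor $(1-\lambda_0h)^2+\sigma^2h<1$ instead.
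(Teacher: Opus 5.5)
Your argument is correct and takes a genuinely different route from the paper's.

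\textbf{The paper's route.} For $A_n$ the paper goes through the ensemble mean: it uses $\|{\bx}_p^i-{\bf M}_{\beta,p}\|\le 2\max_i\|{\bx}_p^i-\bar{\bx}_p\|$, Lemma \ref{lem_ele}(i) and Lemma \ref{lem_init}(i), and contracts with the same first-moment factor $\alpha=r$ from Lemma \ref{lem_exp}. For $B_n$ it switches method. It splits off $\|(\bar{\bx}_n-{\bf M}_{\beta,n})\odot{\bf W}_{h,n}\|$ and applies Jensen to pass to second moments. It then contracts with $\big((1-\lambda_0h)^2+\sigma^2\big)^{n/2}$, which is the only place \eqref{cond_para} enters.

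\textbf{Your route.} You bound both summands by the coordinate diameters $D_p^l$ and factor out $\mathbb E|W_{h,p}^l|=\sqrt{2h/\pi}$ by independence. A single first-moment contraction then handles both sums. So you prove the corollary under \eqref{cond_exp} and \eqref{cond_init} alone, using only Lemma \ref{lem_init}(iii).

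\textbf{What your route avoids.} It sidesteps several loose points in the paper's argument.
\begin{itemize}
\item You keep the absolute value of $\prod_k(1-\lambda_0h-\sigma W_{h,k}^l)$, which Lemma \ref{lem_ele}(ii) as printed omits. The omission is harmless in the paper, since the quantity is squared in \eqref{aa5}, but the absolute value is essential for a first-moment use like yours.
\item Your $l$-dependent products never sit inside a $\max_i\sum_l$. By contrast, \eqref{aa2-1-0} factors them out of one, which needs an extra step.
\item The true second-moment factor is $(1-\lambda_0h)^2+\sigma^2h$, which \eqref{cond_para} dominates only when $h\le1$. This caveat also applies to your closing remark.
\end{itemize}

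\textbf{The step you flag as delicate.} It needs nothing beyond the paper's stated fact that ${\bx}_k$, $k\le n$, is independent of ${\bf W}_{h,n}$. Conditioning one step at a time gives $\mathbb E[D_p^l]=r\,\mathbb E[D_{p-1}^l]$.

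\textbf{What the paper's route buys.} It produces the specific per-step bounds \eqref{aa2-1-0} and \eqref{aa7}, which are written into the hypothesis and proof of Theorem \ref{T3.7}. Your per-step bounds $r^p\sqrt{d\tilde C}$ and $\sqrt{2h/\pi}\,r^p\sqrt{d\tilde C}$ would serve there equally well, after adjusting the constants in Theorem \ref{T3.7}(ii).
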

	
	\begin{proof}
		We split the proof in two parts for $A_n$ and $B_n$, respectively.\newline
		
		\noindent(i)~First, note that
		\begin{align}\label{aa1}
			\|{\bx}_p^i-{\bf M}_{\beta,p}\|\le\|{\bx}_p^i-\bar{\bx}_p\|+\|\bar{\bx}_p-{\bf M}_{\beta,p}\|\le2\max_{i\in[N]}\|{\bx}_p^i-\bar{\bx}_p\|,
		\end{align}
		where we used the fact that $\bar{\bx}_p$ and ${\bf M}_{\beta,p}$ are in the convex hull of $\left\{{\bx}_p^i\right\}_{i=1}^N$.
		
		On the other hand, from Lemma \ref{lem_ele} (i), one can derive
		\begin{align}\label{aa2}
			\begin{aligned}
				\|{\bx}_p^i-\bar{\bx}_p\|&=\sqrt{\sum_{l=1}^d(x_0^{i,l}-\bar{x}_0^l)^2\prod_{k=0}^{p-1}\left(1-\lambda_0h-\sigma W_{h,k}^l\right)^2}\\
				&\le\sum_{l=1}^d\big|x_0^{i,l}-\bar{x}_0^l\big|\prod_{k=0}^{p-1}\big|1-\lambda_0h-\sigma W_{h,k}^l\big|.
			\end{aligned}
		\end{align}
		We put \eqref{aa2} to \eqref{aa1} to get
		\begin{align*}
			\|{\bx}_p^i-{\bf M}_{\beta,p}\|\le2\max_{i\in[N]}\sum_{l=1}^d\big|x_0^{i,l}-\bar{x}_0^l\big|\prod_{k=0}^{p-1}\big|1-\lambda_0h-\sigma W_{h,k}^l\big|.
		\end{align*}
		We combine the above relation and Lemma \ref{lem_exp} with the notation
		\[\alpha(\lambda_0,h,\sigma):=\mathbb E\big|1-\lambda_0h-\sigma\eta_h\big|,\quad\forall~\eta_h\sim\mathcal N(0,h),\]
		to find
		\begin{align}\label{aa2-1-0}
			\begin{aligned}
				\mathbb E\|{\bx}_p^i-{\bf M}_{\beta,p}\|&\le2\mathbb E\left[\max_{i\in[N]}\sum_{l=1}^d|x_0^{i,l}-\bar{x}_0^l|\right]\prod_{k=0}^{p-1}\mathbb E\big|1-\lambda_0h-\sigma W_{h,k}^l\big|\\
				&\le2\mathbb E\left[\max_{i\in[N]}\sum_{l=1}^d|x_0^{i,l}-\bar{x}_0^l|\right]\alpha^p\le2\tilde C\alpha^p,
			\end{aligned}
		\end{align}
		where the constant $\tilde C=\tilde C(N,s_1,\cdots,s_d)$ is the same constant in Lemma \ref{lem_init}. This leads to
		\begin{align*}
			\mathbb E\left[A_n\right]=\sum_{p=0}^n\frac{1}{N}\sum_{i=1}^N\mathbb E\|{\bx}_p^i-{\bf M}_{\beta,p}\|\le2\tilde C\sum_{p=0}^n\alpha^p\le\frac{2\tilde {C}}{1-\alpha}.
		\end{align*}\newline
		
		\noindent(ii)~We split the term in $B_n$ as
		\begin{align}\label{aa2-1}
			\begin{aligned}
				\|({\bx}_n^i-{\bf M}_{\beta,n})\odot{\bf W}_{h,n}\|&\le\|({\bx}_n^i-\bar{\bx}_n)\odot{\bf W}_{h,n}\|+\|(\bar{\bx}_n-{\bf M}_{\beta,n})\odot{\bf W}_{h,n}\|\\
				&=:\mathcal I_1+\mathcal I_2,
			\end{aligned}
		\end{align}
		and make estimates for $\mathcal I_1$ and $\mathcal I_2$, separately.\newline
		
		\noindent$\diamond$ (Estimate of $\mathbb E\mathcal I_1$)~We use Jensen's inequality and Lemma \ref{lem_ele} (i) to get
		\begin{align*}
			&\Big(\mathbb E\|({\bx}_n^i-\bar{\bx}_n)\odot{\bf W}_{h,n}\|\Big)^2\le\mathbb E\Big[\|({\bx}_n^i-\bar{\bx}_n)\odot{\bf W}_{h,n}\|^2\Big]\\
			&\hspace{1cm}=\mathbb E\left[\sum_{l=1}^d(x_0^{i,l}-\bar{x}_0^l)^2\prod_{k=0}^{n-1}\left(1-\lambda_0h-\sigma W_{h,k}^l\right)^2(W_{h,n}^k)^2\right]\\
			&\hspace{1cm}=\sum_{l=1}^d\mathbb E\Big[(x_0^{i,l}-\bar{x}_0^l)^2\Big]\prod_{k=0}^{n-1}\mathbb E\left[\left(1-\lambda_0h-\sigma W_{h,k}^l\right)^2\right]\mathbb E\Big[(W_{h,n}^k)^2\Big]\\
			&\hspace{1cm}=h\Big((1-\lambda_0h)^2+\sigma^2\Big)^n\sum_{l=1}^d\mathbb E\Big[(x_0^{i,l}-\bar{x}_0^l)^2\Big].
		\end{align*}
		So, one can have
		\begin{align}\label{aa3}
			\mathbb E\|({\bx}_n^i-\bar{\bx}_n)\odot{\bf W}_{h,n}\|\le\Big((1-\lambda_0h)^2+\sigma^2\Big)^{n/2}\sqrt{h\sum_{l=1}^d\mathbb E\Big[(x_0^{i,l}-\bar{x}_0^l)^2\Big]}.
		\end{align}\newline
		
		\noindent$\diamond$ (Estimate of $\mathbb E\mathcal I_2$)~We use Jensen's inequality to get
		\begin{align}\label{aa4}
			\begin{aligned}
				&\Big(\mathbb E\|(\bar{\bx}_n-{\bf M}_{\beta,n})\odot{\bf W}_{h,n}\|\Big)^2\\
				&\hspace{1cm}\le\mathbb E\Big[\|(\bar{\bx}_n-{\bf M}_{\beta,n})\odot{\bf W}_{h,n}\|^2\Big]=\mathbb E\left[\sum_{l=1}^d(\bar{x}_n^l-M_{\beta,n}^l)^2(W_{h,n}^l)^2\right]\\
				&\hspace{1cm}=\sum_{l=1}^d\mathbb E\left[(\bar{x}_n^l-M_{\beta,n}^l)^2\right]\mathbb E\left[(W_{h,n}^l)^2\right]=h\sum_{l=1}^d\mathbb E\left[(\bar{x}_n^l-M_{\beta,n}^l)^2\right].
			\end{aligned}
		\end{align}
		We again use the fact that $\bar{\bx}_n,{\bf M}_{\beta,n}\in\mbox{Conv}\left(\left\{{\bx}_n^i\right\}_{i=1}^N\right)$ to see
		\[|\bar{x}_n^l-M_{\beta,n}^l|\le\max_{i,j\in[N]}|x_n^{i,l}-x_n^{j,l}|.\]
		Then, we combine this relation and Lemma \ref{lem_ele} (ii) to find
		\begin{align}\label{aa5}
			\begin{aligned}
				\mathbb E\left[(\bar{x}_n^l-M_{\beta,n}^l)^2\right]&\le\mathbb E\left[\max_{i,j\in[N]}(x_n^{i,l}-x_n^{j,l})^2\right]\\
				&=\mathbb E\left[\prod_{k=0}^{n-1}\left(1-\lambda_0h-\sigma W_{h,k}^l\right)^2\max_{i,j\in[N]}(x_0^{i,l}-x_0^{j,l})^2\right]\\
				&=\Big((1-\lambda_0h)^2+\sigma^2\Big)^n\mathbb E\left[\max_{i,j\in[N]}(x_0^{i,l}-x_0^{j,l})^2\right].
			\end{aligned}
		\end{align}
		Now, we put \eqref{aa5} to \eqref{aa4} to derive
		\begin{align}\label{aa6}
			\mathbb E\|(\bar{\bx}_n-{\bf M}_{\beta,n})\odot{\bf W}_{h,n}\|\le\Big((1-\lambda_0h)^2+\sigma^2\Big)^{n/2}\sqrt{h\sum_{l=1}^d\mathbb E\left[\max_{i,j\in[N]}(x_0^{i,l}-x_0^{j,l})^2\right]}.
		\end{align}\newline
		
		\noindent Finally, we combine \eqref{aa2-1}, \eqref{aa3} and \eqref{aa6} to get
		\begin{align}\label{aa7}
			\begin{aligned}
				&\mathbb E\|({\bx}_n^i-{\bf M}_{\beta,n})\odot{\bf W}_{h,n}\|\\
				&\hspace{1cm}\le\Big((1-\lambda_0h)^2+\sigma^2\Big)^{n/2}\left(\sqrt{h\sum_{l=1}^d\mathbb E\Big[(x_0^{i,l}-\bar{x}_0^l)^2\Big]}+\sqrt{h\sum_{l=1}^d\mathbb E\left[\max_{i,j\in[N]}(x_0^{i,l}-x_0^{j,l})^2\right]}\right)\\
				&\hspace{1cm}\le2\sqrt{h\tilde C}\Big((1-\lambda_0h)^2+\sigma^2\Big)^{n/2},
			\end{aligned}
		\end{align}
		where we used Lemma \ref{lem_init}. Hence, one can derive
		\begin{align*}
			\mathbb E\left[B_n\right]&=\sum_{p=0}^n\frac{1}{N}\sum_{i=1}^N\mathbb E\|({\bx}_p^i-{\bf M}_{\beta,p})\odot{\bf W}_{h,p}\|\\
			&\le2\sqrt{h\tilde C}\sum_{p=0}^n\Big((1-\lambda_0h)^2+\sigma^2\Big)^{n/2}<\frac{2\sqrt{h\tilde C}}{1-\sqrt{(1-\lambda_0h)^2+\sigma^2}}.
		\end{align*}\newline
		
		\noindent Now, the constant $C$ determined as
		\[C:=\max\left\{\frac{2\tilde C}{1-\alpha},~\frac{2\sqrt{\tilde C}}{1-\sqrt{(1-\lambda_0h)^2+\sigma^2}}\right\}\]
		satisfies the desire estimates.
	\end{proof}
	In the next two theorems, we present a formation of a global consensus and convergence of consensus state to the global minimum of an object function.
	\begin{theorem}  \label{T3.6}
		Suppose system parameters and initial data satisfy \eqref{cond_exp}, \eqref{cond_init} and \eqref{cond_para}. Then, for a solution process $\{{\bx}_n^i\}_{i=1}^N$ to \eqref{discrete}, there exists a random vector ${\bx}_\infty\in\mathbb R^d$ such that
		\[\lim_{n\to\infty}{\bx}_n^i={\bx}_\infty\quad\mbox{almost surely},\quad i\in[N].\]
	\end{theorem}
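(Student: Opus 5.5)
Writing the scheme in telescoping form, for each $i\in[N]$ and $n\ge1$,
\[
{\bx}_n^i={\bx}_0^i-\sum_{p=0}^{n-1}\Big(\lambda_0h({\bx}_p^i-{\bf M}_{\beta,p})+\lambda_1h(\bar{\bx}_p-{\bf M}_{\beta,p})+\sigma({\bx}_p^i-{\bf M}_{\beta,p})\odot{\bf W}_{h,p}\Big).
\]
The plan is to show that this series converges absolutely almost surely, so that ${\bx}_n^i$ has an almost sure limit ${\bx}_\infty^i$. We then show the limits coincide for different $i$ by means of the consensus estimate.

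\textbf{Step 1 (absolute convergence).} Each summand is bounded in norm by
\[
\lambda_0h\|{\bx}_p^i-{\bf M}_{\beta,p}\|+\lambda_1h\|\bar{\bx}_p-{\bf M}_{\beta,p}\|+\sigma\|({\bx}_p^i-{\bf M}_{\beta,p})\odot{\bf W}_{h,p}\|.
\]
Since $\bar{\bx}_p=\frac1N\sum_j{\bx}_p^j$, the middle term satisfies $\|\bar{\bx}_p-{\bf M}_{\beta,p}\|\le\frac1N\sum_j\|{\bx}_p^j-{\bf M}_{\beta,p}\|$. Also, for each fixed $i$ we have $\|{\bx}_p^i-{\bf M}_{\beta,p}\|\le\sum_j\|{\bx}_p^j-{\bf M}_{\beta,p}\|=N\cdot\frac1N\sum_j\|\cdot\|$, and the same comparison holds for the noise term. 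Hence the partial sums of the norms of the summands are bounded by $(\lambda_0+\lambda_1)hNA_{n-1}+\sigma NB_{n-1}$.

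By Corollary~\ref{cor_AB}, $\mathbb E[A_n]\le C$ and $\mathbb E[B_n]\le\sqrt hC$ uniformly in $n$. Both $A_n$ and $B_n$ are nondecreasing in $n$ with nonnegative increments, so monotone convergence gives $\mathbb E[A_\infty]\le C$ and $\mathbb E[B_\infty]\le\sqrt hC$, and in particular $A_\infty,B_\infty<\infty$ almost surely. Therefore the series of norms converges almost surely, the series itself converges absolutely, and ${\bx}_n^i\to{\bx}_\infty^i$ almost surely for each $i$.

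\textbf{Step 2 (common limit).} We need ${\bx}_\infty^i={\bx}_\infty^j$ almost surely. The most direct route is through $A_\infty$. Because $\mathbb E[A_\infty]<\infty$, the terms $\frac1N\sum_i\|{\bx}_p^i-{\bf M}_{\beta,p}\|$ tend to $0$ almost surely, so $\|{\bx}_p^i-{\bf M}_{\beta,p}\|\to0$ for every $i$. Consequently $\|{\bx}_p^i-{\bx}_p^j\|\to0$ almost surely, which forces ${\bx}_\infty^i={\bx}_\infty^j=:{\bx}_\infty$.

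As a cross-check, one can also use Theorem~\ref{T3.3} together with \eqref{cond_para}: the mean-square differences decay geometrically, so Fatou's lemma gives $\mathbb E\|{\bx}_\infty^i-{\bx}_\infty^j\|^2=0$. This alternative needs $2\lambda_0-\lambda_0^2h-\sigma^2h>0$, which is a different condition, so I would rely on the $A_\infty$ argument.

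\textbf{Main obstacle.} The substantive analysis is already contained in Corollary~\ref{cor_AB}; Steps 1 and 2 only convert its uniform $L^1$ bounds into almost sure statements. The one point needing care is that the bounds on $\mathbb E[A_n]$ and $\mathbb E[B_n]$ must be uniform in $n$ in order to pass to $A_\infty$ and $B_\infty$. This is exactly what Corollary~\ref{cor_AB} provides, since the constant $C$ does not depend on $n$.
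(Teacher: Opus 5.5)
Your proof is correct and follows the paper's route. You bound the telescoped increments by $A_n$ and $B_n$, use the uniform bounds of Corollary~\ref{cor_AB} to get almost sure convergence, and then identify the limits. Your refinements are all sound and slightly cleaner than the paper: monotone convergence in place of the paper's appeal to Doob's submartingale theorem, the explicit factor $N$ when comparing a single particle's term with the average (which the paper's \eqref{T3.6P1} silently drops), and the common limit obtained from $A_\infty<\infty$ rather than from Theorem~\ref{T3.3}. One small correction to your cross-check: Theorem~\ref{T3.3} needs $2\lambda_0-\lambda_0^2h-\sigma^2>0$, i.e.\ $(1-\lambda_0h)^2+\sigma^2h<1$, not the condition you wrote. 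This is implied by \eqref{cond_para} whenever $h\le 1$, so the paper's consensus-based identification of the limit is also available in that regime.
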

	\begin{proof} We split its proof into two steps. \newline
		
		\noindent $\bullet$~Step A (${\bx}_n^i$ is a Cauchy for a.e. $\omega \in \Omega$):~First, note that
		\begin{align}\label{T3.6P1}
			\begin{aligned}
				& \|{\bx}_m^i-{\bx}_n^i\| \le\sum_{p=n}^{m-1}\|{\bx}_{p+1}^i-{\bx}_p^i\|\\
				&\hspace{0.5cm} =\sum_{p=n}^{m-1}\|-\lambda_0h({\bx}_p^i-{\bf M}_{\beta,p})-\lambda_1h(\bar{\bx}_p-{\bf M}_{\beta,p})-\sigma({\bx}_p^i-{\bf M}_{\beta,p})\odot {\bf W}_{h,p}\|\\
				&\hspace{0.5cm} \le\lambda_0h\sum_{p=n}^{m-1}\|{\bx}_p^i-{\bf M}_{\beta,p}\|+\lambda_1h\sum_{p=n}^{m-1}\|\bar{\bx}_p-{\bf M}_{\beta,p}\|+\sigma\sum_{p=n}^{m-1}\|({\bx}_p^i-{\bf M}_{\beta,p})\odot {\bf W}_{h,p}\|\\
				& \hspace{0.5cm} \le\lambda_0h(A_{m-1}-A_{n-1})+\frac{\lambda_1h}{N}\sum_{j=1}^N\left(A_{m-1}-A_{n-1}\right)+\sigma(B_{m-1}-B_{n-1}),
			\end{aligned}
		\end{align}
		where we used
		\begin{align}\label{T3.6P2}
			\|\bar{\bx}_p-{\bf M}_{\beta,p}\|=\Big\|\frac{1}{N}\sum_{j=1}^N({\bx}_p^j-{\bf M}_{\beta,p})\Big\|\le\frac{1}{N}\sum_{j=1}^N\|{\bx}_p^j-{\bf M}_{\beta,p}\|.
		\end{align}
		On the other hand, by Corollary \ref{cor_AB},  $\{ A_{n} \}_{n\in\mathbb N}$ and $\{ B_{n} \}_{n\in\mathbb N}$ are uniformly bounded sub-martingales. Hence, by Doob's Martingale convergence theorem, there exist two positive constants $A_\infty,B_\infty<\infty$ such that
		\[A_n\to A_\infty,
		\quad B_n\to B_\infty\quad\mbox{a.e.}~~\omega\in\Omega,\quad\mbox{as}~~n\to\infty,\]
		which leads to
		\[ \{ A_n \}_{n\in\mathbb N}~~ \mbox{and}~~\{ B_n \}_{n\in\mathbb N}~~\mbox{are Cauchy sequences for a.s. }\omega\in\Omega.\]
		Therefore, for a given $\e >0$, we can choose a sufficiently large $N$ such that
		\begin{align}\label{T1.3peq2}
			m,n>N\quad \Longrightarrow\quad\max \Big \{ |A_m-A_n|,|B_m-B_n|  \Big \}\le\frac{\e}{2} \quad \mbox{a.s.}~~\omega\in\Omega.
		\end{align}
		Then, \eqref{T3.6P1} and \eqref{T1.3peq2} imply that $\{{\bx}_n^i\}_{i=1}^N$ is a Cauchy. Therefore, there exists ${\bx}_\infty^i$ such that $\lim_{n\to\infty} {\bx}_n^i={\bx}_\infty^i$ a.s. \newline
		
		\noindent $\bullet$~Step B (the limit ${\bx}_\infty^i$ is independent of $i \in [N]$):~we use the global consensus result to show that ${\bx}_n^i$ converges to the same ${\bx}_\infty$  a.s.~for all $i \in [N]$.
	\end{proof}
	In the following theorem, we provide an error estimate.
	\begin{theorem} \label{T3.7}
		Suppose that an objective function, system parameter and initial data satisfy the following two conditions:
		\begin{enumerate}[label=(\roman*)]
			\item
			The objective function $L=L({\bx})$ is bounded
			with a Lipschitz constant $C_L$ and a lower bound $L_\sharp$, i.e.,
			\[|L({\bx})-L(\by)|\le C_L\|{\bx}-\by\|,\quad\forall~ {\bx},\by\in\mathbb{R}^d\quad\mbox{and}\quad\inf_{{\bx}\in\mathbb{R}^d}L({\bx})=L_\sharp.\]
			\item
			System parameters and initial data satisfy the conditions \eqref{cond_exp}, \eqref{cond_init}, \eqref{cond_para} and
			\[
			\mathbb E\left[e^{-\beta L({\bx}_0)}\right]-\beta C_Le^{-\beta L_\sharp}\left(\frac{2(\lambda_0+\lambda_1)h\tilde C}{1-\alpha}+\frac{2\sigma\sqrt{h\tilde C}}{1-\sqrt{(1-\lambda_0h)^2+\sigma^2}}\right)\ge\varepsilon\mathbb E\left[e^{-\beta L({\bx}_0)}\right]
			\]
			for some $\e\in(0,1)$ and a random variable ${\bx}_0:\Omega\to\mathbb R^d$ satisfying \eqref{cond_init}.
		\end{enumerate}
		Then, for a solution process to \eqref{discrete}, there exists a function $E: [0, \infty) \to\mathbb{R}$ such that
		\[\lim_{\beta\to\infty}E(\beta)=0 \quad \mbox{and} \quad \essinf_{\omega\in\Omega}L({\bx}_\infty(\omega))\le\essinf_{\omega\in\Omega}L({\bx}_0(\omega))+E(\beta).\]
	\end{theorem}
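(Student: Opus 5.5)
The plan is the standard Laplace-principle argument of \cite{HJK}, adapted to the extra average drift. I would track the weighted quantity $\mathbb E\big[e^{-\beta L({\bx}_n^i)}\big]$, or more conveniently its particle average $\frac1N\sum_i \mathbb E\big[e^{-\beta L({\bx}_n^i)}\big]$, and show it cannot decay much below its initial value. Then I pass to the limit using Theorem \ref{T3.6} and apply the Laplace principle.

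\textbf{Step 1 (one-step control).} For each $p$, the mean value theorem applied to $x\mapsto e^{-\beta x}$, together with $L\ge L_\sharp$ and the Lipschitz bound, gives
\[
\big|e^{-\beta L({\bx}_{p+1}^i)}-e^{-\beta L({\bx}_p^i)}\big|\le \beta C_L e^{-\beta L_\sharp}\|{\bx}_{p+1}^i-{\bx}_p^i\|.
\]
I then bound $\|{\bx}_{p+1}^i-{\bx}_p^i\|$ as in \eqref{T3.6P1}, using \eqref{T3.6P2} for the $\lambda_1$-term:
\[
\|{\bx}_{p+1}^i-{\bx}_p^i\|\le \lambda_0h\|{\bx}_p^i-{\bf M}_{\beta,p}\|+\frac{\lambda_1 h}{N}\sum_j\|{\bx}_p^j-{\bf M}_{\beta,p}\|+\sigma\|({\bx}_p^i-{\bf M}_{\beta,p})\odot{\bf W}_{h,p}\|.
\]

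\textbf{Step 2 (telescoping).} I sum over $p=0,\dots,n-1$ and average over $i$. The right-hand side then becomes $(\lambda_0+\lambda_1)hA_{n-1}+\sigma B_{n-1}$. Taking expectations and invoking the explicit bounds from the proof of Corollary \ref{cor_AB}, namely $\mathbb E A_n\le 2\tilde C/(1-\alpha)$ and $\mathbb E B_n\le 2\sqrt{h\tilde C}/(1-\sqrt{(1-\lambda_0h)^2+\sigma^2})$, gives
\[
\frac1N\sum_i\mathbb E\big[e^{-\beta L({\bx}_n^i)}\big]\ge \mathbb E\big[e^{-\beta L({\bx}_0)}\big]-\beta C_Le^{-\beta L_\sharp}\Big(\tfrac{2(\lambda_0+\lambda_1)h\tilde C}{1-\alpha}+\tfrac{2\sigma\sqrt{h\tilde C}}{1-\sqrt{(1-\lambda_0h)^2+\sigma^2}}\Big).
\]
This uses that the ${\bx}_0^i$ are identically distributed as ${\bx}_0$. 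By hypothesis (ii), the right-hand side is at least $\varepsilon\,\mathbb E[e^{-\beta L({\bx}_0)}]$, uniformly in $n$.

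\textbf{Step 3 (limit and Laplace principle).} By Theorem \ref{T3.6}, ${\bx}_n^i\to{\bx}_\infty$ almost surely. Continuity of $L$ and boundedness of $e^{-\beta L}\le e^{-\beta L_\sharp}$ let me apply dominated convergence, which yields $\mathbb E[e^{-\beta L({\bx}_\infty)}]\ge\varepsilon\,\mathbb E[e^{-\beta L({\bx}_0)}]$. Taking $-\frac1\beta\log$ of both sides gives
\[
-\tfrac1\beta\log\mathbb E\big[e^{-\beta L({\bx}_\infty)}\big]\le -\tfrac1\beta\log\mathbb E\big[e^{-\beta L({\bx}_0)}\big]-\tfrac1\beta\log\varepsilon.
\]
The left side is always at least $\essinf L({\bx}_\infty)$, since $e^{-\beta L({\bx}_\infty)}\le e^{-\beta\,\essinf L({\bx}_\infty)}$ almost surely. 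For the right side, the Laplace principle says $-\frac1\beta\log\mathbb E[e^{-\beta L({\bx}_0)}]\to\essinf L({\bx}_0)$ as $\beta\to\infty$; this holds because $L({\bx}_0)$ is bounded below, so a Chebyshev-type bound on the event $\{L({\bx}_0)<\essinf L({\bx}_0)+\delta\}$ works. I would therefore define
\[
E(\beta):=-\tfrac1\beta\log\mathbb E\big[e^{-\beta L({\bx}_0)}\big]-\essinf L({\bx}_0)-\tfrac1\beta\log\varepsilon,
\]
which tends to $0$.

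\textbf{Main obstacle.} The delicate point is the dependence on $\beta$. Hypothesis (ii) is assumed for the given $\beta$, and $\tilde C$ and $\alpha$ do not depend on $\beta$, but $\varepsilon$ might. I would treat $\varepsilon$ as fixed in $(0,1)$ while $\beta$ varies, reading the hypothesis as holding along the family of $\beta$ considered. With that reading, $\frac1\beta\log\varepsilon\to0$ and $E(\beta)\to0$. A secondary technical point is justifying the $n\to\infty$ passage in Step 3, but dominated convergence handles it as indicated.
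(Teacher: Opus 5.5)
Your proposal is correct and follows the paper's proof essentially step for step. The only differences are cosmetic: the paper uses $e^x-1\ge x$ rather than the mean value theorem, bounds each increment $\mathbb E\|{\bx}_{n+1}^i-{\bx}_n^i\|$ via \eqref{aa2-1-0} and \eqref{aa7} before summing instead of going through $A_{n-1},B_{n-1}$, and passes to $n\to\infty$ without the dominated-convergence justification you supply. On your stated obstacle, you need no reading of hypothesis (ii) along a family of $\beta$, and none is available with fixed parameters: (ii), together with $\mathbb E[e^{-\beta L({\bx}_0)}]\le e^{-\beta L_\sharp}$, forces $\beta C_L K\le 1-\varepsilon$, where $K$ is the bracketed constant. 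With $\varepsilon$ fixed, $E(\beta)$ is an explicit formula tending to $0$, and the inequality is asserted only at the given $\beta$, exactly as in the paper.
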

	\begin{proof}
		We slightly improve arguments employed in the proof of \cite[Theorem 3.2]{HJK}
		by replacing $\mathcal{C}^2$-regularity assumption of the objective function $L$ with Lipschtiz continuity. More precisely, we proceed our proof by avoiding estimates for $\nabla^2L$. Since overall proof can be found in \cite{HJK}, we rather focus on the key part: since $e^x-1\ge x$ for all $x\in\mathbb{R}$, we have
		\begin{align}\label{T1.4peq1}
			\begin{aligned}
				&\frac{1}{N}\sum_{i=1}^Ne^{-\beta L({\bx}_{n+1}^i)}-\frac{1}{N}\sum_{i=1}^Ne^{-\beta L({\bx}_n^i)} \\
				& \hspace{1cm} =\frac{1}{N}\sum_{i=1}^Ne^{-\beta L({\bx}_n^i)}(e^{-\beta(L({\bx}_{n+1}^i)-L({\bx}_n^i))}-1)\\
				&  \hspace{1cm}  \ge\frac{1}{N}\sum_{i=1}^Ne^{-\beta L({\bx}_n^i)}(-\beta)(L({\bx}_{n+1}^i)-L({\bx}_n^i))\\
				&  \hspace{1cm}  \ge-\frac{\beta}{N}\sum_{i=1}^Ne^{-\beta L({\bx}_n^i)}C_L\|{\bx}_{n+1}^i-{\bx}_n^i\|\ge-\frac{\beta C_Le^{-\beta L_\sharp}}{N}\sum_{i=1}^N\|{\bx}_{n+1}^i-{\bx}_n^i\|.
			\end{aligned}
		\end{align}
		Now, we use \eqref{aa2-1-0}, \eqref{aa7} and \eqref{T3.6P2} to see
		\begin{align}\label{T1.4peq2}
			\begin{aligned}
				&\mathbb{E}\|{\bx}_{n+1}^i-{\bx}_n^i\|\\
				&\hspace{1cm}=\mathbb{E}\|-\lambda_0h({\bx}_n^i-{\bf M}_{\beta,n})-\lambda_1h(\bar{\bx}_n-{\bf M}_{\beta,n})-\sigma({\bx}_n^i-{\bf M}_{\beta,n})\odot {\bf W}_{h,n}\|\\
				&\hspace{1cm}\le\lambda_0h\mathbb{E}\|{\bx}_n^i-{\bf M}_{\beta,n}\|+\lambda_1h\mathbb{E}\|\bar{\bx}_n-{\bf M}_{\beta,n}\|+\sigma\mathbb{E}\|({\bx}_n^i-{\bf M}_{\beta,n})\odot {\bf W}_{h,n}\|\\
				&\hspace{1cm}\le\lambda_0h\mathbb E\|{\bx}_n^i-{\bf M}_{\beta,n}\|+\frac{\lambda_1h}{N}\sum_{j=1}^N\mathbb E\|{\bx}_n^j-{\bf M}_{\beta,n}\|+\sigma\mathbb{E}\|({\bx}_n^i-{\bf M}_{\beta,n})\odot {\bf W}_{h,n}\|\\
				&\hspace{1cm}\le2(\lambda_0+\lambda_1)h\tilde C\alpha^n+2\sigma\sqrt{h\tilde C}\Big((1-\lambda_0h)^2+\sigma^2\Big)^{n/2}.
			\end{aligned}
		\end{align}
		\noindent We sum up \eqref{T1.4peq1} over $n$ and take expectation to find
		\begin{align*}
			\mathbb{E}\left[\frac{1}{N}\sum_{i=1}^Ne^{-\beta L({\bx}_n^i)}\right]&\ge\mathbb{E}\left[\frac{1}{N}\sum_{i=1}^Ne^{-\beta L({\bx}_0^i)}\right]-\frac{\beta C_Le^{-\beta L_\sharp}}{N}\sum_{k=0}^{n-1}\sum_{i=1}^N\mathbb{E}\|{\bx}_{k+1}^i-{\bx}_{k}^i\|\\
			&\hspace{-2.2cm}\ge\mathbb E\left[e^{-\beta L({\bx}_0)}\right]-\beta C_Le^{-\beta L_\sharp}\left(\frac{2(\lambda_0+\lambda_1)h\tilde C}{1-\alpha}+\frac{2\sigma\sqrt{h\tilde C}}{1-\sqrt{(1-\lambda_0h)^2+\sigma^2}}\right),
		\end{align*}
		where we have used \eqref{T1.4peq2} in the second inequality.
		As $n\to\infty$, we obtain
		\[\mathbb{E}\big[e^{-\beta L({\bx}_\infty)}\big]\ge \varepsilon\mathbb{E}\left[e^{-\beta L({\bx}_0)}\right].\]
		We take the logarithm on both sides of the above inequality to get
		\[-\frac{1}{\beta}\log\mathbb{E}\big[e^{-\beta L({\bx}_\infty)}\big]\le-\frac{1}{\beta}\log\mathbb{E}\big[e^{-\beta L({\bx}_0)}\big]-\frac{1}{\beta}\log\e.\]
		We also combine the above relation and
		\[\essinf_{\omega\in\Omega}L({\bx}_\infty(\omega))=-\frac{1}{\beta}\log e^{-\beta\essinf_{\omega\in\Omega}L({\bx}_\infty(\omega))}\le-\frac{1}{\beta}\log\mathbb{E}\big[e^{-\beta L({\bx}_\infty(\omega))}\big]\]
		to derive
		\[\essinf_{\omega\in\Omega}L({\bx}_\infty(\omega))\le-\frac{1}{\beta}\log\mathbb{E}\big[e^{-\beta L({\bx}_0)}\big]-\frac{1}{\beta}\log\e.\]
		Now, we use Laplace principle \cite{R-S} to derive
		\[\lim_{\beta\to\infty}-\frac{1}{\beta}\log\mathbb{E}\big[e^{-\beta L({\bx}_0)}\big]=\essinf_{\omega\in\Omega}L({\bx}_0(\omega)).\]
		So, if we define
		\[E(\beta):=-\essinf_{\omega\in\Omega}L({\bx}_0(\omega))-\frac{1}{\beta}\log\mathbb{E}\big[e^{-\beta L({\bx}_0)}\big]-\frac{1}{\beta}\log\e,\]
		we have
		\[\essinf_{\omega\in\Omega}L({\bx}_\infty)\le\essinf_{\omega\in\Omega}L({\bx}_0(\omega))+E(\beta),\]
		where $\lim_{\beta\to\infty}E(\beta)=0$. This yields our desired result.
	\end{proof}
	\begin{remark}
		\begin{enumerate}
			\item While the inequality condition in Theorem \ref{T3.7} (ii) requires the step size $h$ to be sufficiently small, condition \eqref{cond_para} implies that $h$ cannot be chosen arbitrarily small independently of $\sigma$. Specifically, condition \eqref{cond_para} restricts $h$ to the interval $(\frac{1 - \sqrt{1 - \sigma^{2}}}{\lambda_{0}}, \frac{1 + \sqrt{1 - \sigma^{2}}}{\lambda_{0}})$ assuming $\sigma < 1$. Thus, this assumption acts as a standard step size restriction that must be carefully balanced with the noise intensity, rather than a structural constraint on the dynamics.
			\item If a global minimizer of $L$ is contained in the support of ${\bx}_0$, then
			\[\essinf_{\omega\in\Omega}L({\bx}_\infty(\omega))\le L_\sharp +E(\beta).\]
		\end{enumerate}
	\end{remark}
	
	
	\section{Numerical Simulations}\label{sec:4}
	\setcounter{equation}{0}
	
	In this section, we provide several numerical results using non-stochastic/stochastic CBO and Ad-CBO algorithms for static and dynamic objective functions.
	
	\subsection{Static case} We evaluate the performance of Ad-CBO algorithm comparing with non-stochastic and stochastic CBO algorithms for the static case by testing its efficiency in finding a minimum point of the Rastrigin function, which is used by Adam-CBO in \cite{CJL}. The Rastrigin function $f:\mathbb R^d\to\mathbb R$ is defined by
	\begin{align}\label{Rastrigin}
		f(\bx)=\frac{1}{d}\sum_{i=1}^d\Big((x^i)^2-10\cos(2\pi x^i)+10\Big),\quad\bx=(x^1,\cdots,x^d)\in\mathbb R^d.
	\end{align}
	This function has at least $2^d-1$ local minimum points around the unique global minimum, the \textit{origin}, which is a good candidate for testing an optimization problem. In Figure \eqref{fig:rast1}, one can see a 3-dimensional picture of the Rastrigin function on $\mathbb R^2$, and Figure \eqref{fig:rast2} shows the local minima over the contour plot of it.
	
	\begin{figure}[htbp]
		\centering
		\begin{subfigure}[b]{0.5\textwidth}
			\begin{overpic}
				[width=1\linewidth]{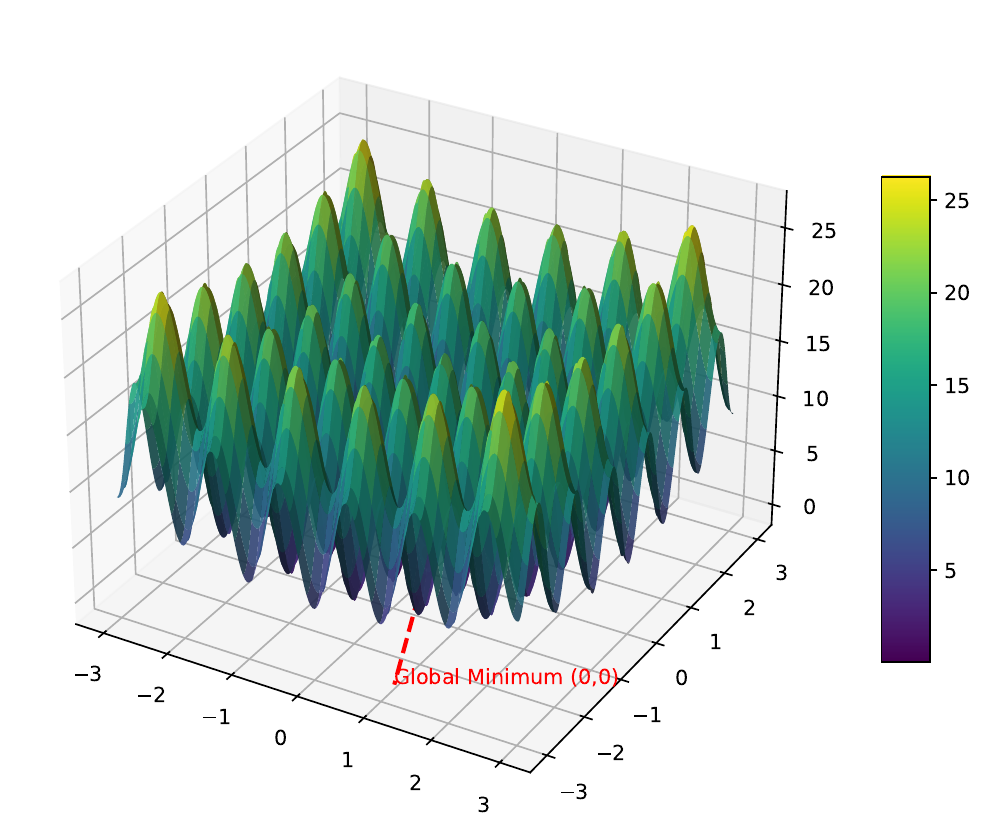}
				\put(-1.1,40){\rotatebox{100}{\small $f(x,y)$}}
				\put(25,7){\small $x$}
				\put(70,10){\small $y$}
			\end{overpic}
			\caption{Graph of Rastrigin function}
			\label{fig:rast1}
		\end{subfigure}
		\begin{subfigure}[b]{0.5\textwidth}
			\begin{overpic}
				[width=0.9\textwidth]{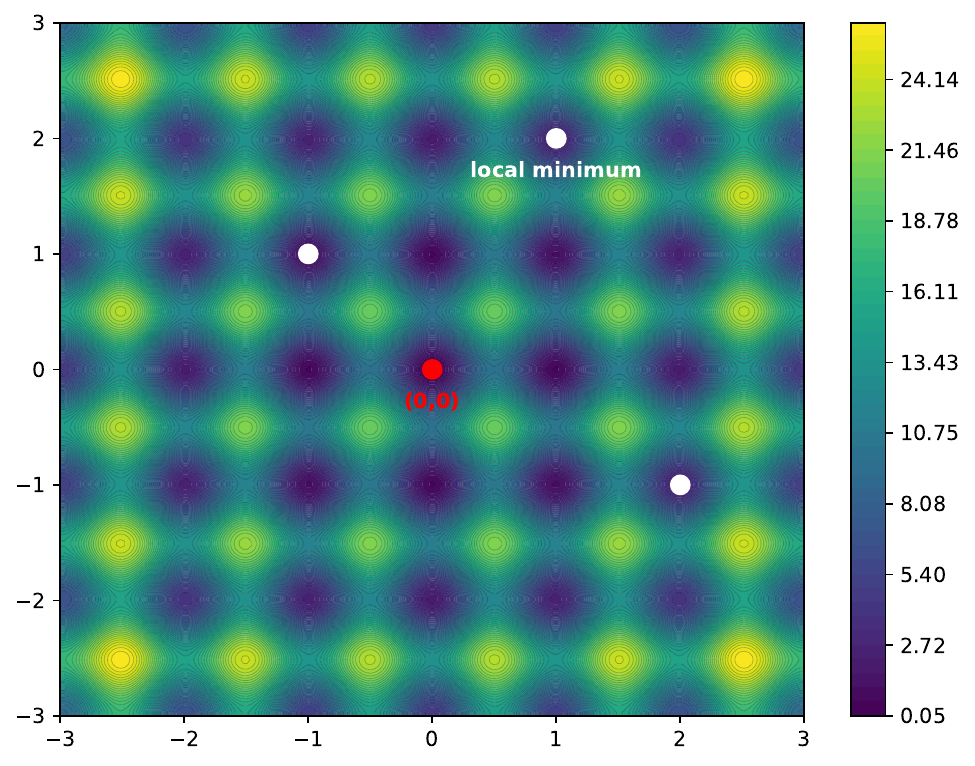}
				\put(12,-1.5){\small $x$}
				\put(-2,10){\small $y$}
				\put(101,40){\small $f(x,y)$}
			\end{overpic}
			\subcaption{{Contours of Rastrigin function}}
			\label{fig:rast2}
		\end{subfigure}
		\caption{Graphical representation of \eqref{Rastrigin}  with $d=2$}
	\end{figure}
	Here, we clarify the name of algorithms. In this section, we call CBO algorithm for \eqref{discrete} with $\lambda_1=0$, i.e.,
	\[\bx_{n+1}^i=\bx_n^i-\lambda_0h(\bx_n^i-{\bf M}_{\beta,n})-\sigma(\bx_n^i-{\bf M}_{\beta,n})\odot{\bf W}_{h,n},\]
	and Ad-CBO algorithm for \eqref{discrete} with $\sigma=0$, i.e.,
	\[\bx_{n+1}^i=\bx_n^i-\lambda_0h(\bx_n^i-{\bf M}_{\beta,n})-\lambda_1h(\bar{\bx}_n-{\bf M}_{\beta,n}).\]
	We also consider Adam-CBO in numerical tests. It is based on Adam optimizer \cite{KB} with the gradient part of CBO algorithm. Since the Adam optimizer is a powerful tool for minimization problems, {\color{black}incorporating it into the CBO algorithm can potentially improve its search performance \cite{CJL}. Hence, our results include the comparison between the CBO algorithm with average and Adam-CBO algorithm.} The Adam-CBO algorithm is given as \cref{adam_unified_algo}. The decay rate parameters are chosen to be $\beta_1=0.9$ and $\beta_2=0.99$ referring to \cite{KB}.
	{\color{black}
		\begin{algorithm}
			\begin{algorithmic}
				\Require $h$: Stepsize
				\Require $\beta_1,\beta_2\in[0,1)$: Exponential decay rates for the moment estimates
				\Require $L_n(\bx)$ or $L(\bx)$: Objective function (time-varying or static)
				\Require $\bx_0=[\bx_0^1,\cdots,\bx_0^N]\in\mathbb R^{d\times N}$: Initial data
				\Require $\texttt{dynamic\_mode}\in\{\texttt{True}, \texttt{False}\}$: Whether the algorithm is dynamic
				\Require $\epsilon_{\mathrm{tol}}$: Convergence tolerance threshold (only used in static mode)
				\Require $T$: Maximum number of iterations (only used in dynamic mode)
				\Require $\pi$: Projection operator onto constraint set $\mathcal S$ (only used in dynamic mode)
				
				\State $\bm_0=[\bm_0^1,\cdots,\bm_0^N]\in\mathbb R^{d\times N}\gets{\bf 0}$ \Comment{Initialize $1^{st}$ moment vectors}
				\State $\bv_0=[\bv_0^1,\cdots,\bv_0^N]\in\mathbb R^{d\times N}\gets{\bf 0}$ \Comment{Initialize $2^{nd}$ moment vectors}
				\State $n\gets0$ \Comment{Initialize timestep}
				\While{(\texttt{dynamic\_mode} \textbf{and} $n<T$) \textbf{or} (\textbf{not} \texttt{dynamic\_mode} \textbf{and} $\max_{1\le i,j\le N}\|\bx_n^i-\bx_n^j\|>\epsilon_{\mathrm{tol}}$)}
				\State $n\gets n+1$
				\For{$i=1,\cdots,N$}
				\State $\bg_n^i\gets\lambda_0(\bx_{n-1}^i-{\bf M}_{\beta,n-1})$ \Comment{CBO algorithm}
				\State $\bm_n^i\gets\beta_1\cdot\bm_{n-1}^i+(1-\beta_1)\cdot\bg_n^i$
				\State $\bv_n^i\gets\beta_2\cdot\bv_{n-1}^i+(1-\beta_2)\cdot\bg_n^i\odot\bg_n^i$
				\State $\hat{\bm}_n^i\gets\bm_n^i/(1-\beta_1)$
				\State $\hat{\bv}_n^i\gets\bv_n^i/(1-\beta_2)$
				\State $\by_n^i\gets\bx_{n-1}^i-h\cdot\hat{\bm}_n^i/(\sqrt{\hat{\bv}_n^i}+10^{-6})$ \Comment{Element-wise division}
				\If{\texttt{dynamic\_mode}}
				\State $\bx_n^i\gets\pi(\by_n^i)$ \Comment{Projection in dynamic case}
				\Else
				\State $\bx_n^i\gets\by_n^i$
				\EndIf
				\EndFor
				\EndWhile
			\end{algorithmic}
			\caption{Unified Adam-CBO algorithm (Static and Dynamic cases)}\label{adam_unified_algo}
		\end{algorithm}
	}
	
	For numerical examples, we set $d=15$ and use $N=50$ particles, which are uniformly randomly distributed on $[2,4]^d$ at the initial time ($n=0$). Note that this ensures initial configurations do not contain the optimal point, which makes the optimization problem to be more challenging. We set $\beta=100$, a step size of $h=0.1$ and $\epsilon_{\mathrm{tol}}={\color{black}10^{-6}}$ for tolerance. Given the tolerance value, we regard particles make consensus if
	\[\max_{l\in[d]}\left(\max_{i,j\in[N]}|x_n^{i,l}-x_n^{j,l}|\right)<\epsilon_{\mathrm{tol}}\quad\mbox{where}\quad\bx_n^i=(x_n^{i,1},\cdots,x_n^{i,d}),\]
	and stop the iteration.
	
	To evaluate the consistency of the algorithm's results upon random initial data and noise, we make 50 simulations for each algorithm and calculate the mean and variance from iterations. We denote the consensus point as $\bx_{\infty,k}$ and the iteration numbers as $n_k$ until the consensus in the $k$-th simulation. For each algorithm, we obtain
	\[\left\{\bx_{\infty,k}\right\}_{k=1}^{50}\quad\mbox{and}\quad\left\{n_k\right\}_{k=1}^{50}.\]
	{\color{black}Here, we briefly explain our choice of the parameters $\sigma$ and $\lambda_1$ in the experiment. Considering the terms
		\[\lambda_1h(\bar\bx_n-{\bf M}_{\beta,n}),\qquad\sigma(\bx_n^i-{\bf M}_{\beta,n})\odot{\bf W}_{h,n},\]
		one sees that equal values of $\lambda_1$ and $\sigma$ yield similar magnitudes, as ${\bf W}_{h,n}$ has elements $\sim N(0,h)$. In any case, particles must reach consensus to achieve optimization. While Ad-CBO (without noise) always succeeds, CBO with noise is non-trivial.
		
		For particles $\{\bx_n^i\}_{i\in[N],n\ge0}$ generated by discrete CBO,
		\begin{align*}
			x_n^{i,l}-x_n^{j,l}=(x_0^{i,l}-x_0^{j,l})\Pi_{k=0}^{n-1}(1-h\lambda_0-\sigma W_{h,k}^l).
		\end{align*}
		By the law of large numbers, 
		\begin{align}\label{lln}
			\lim_{n\to\infty}\frac{1}{n}\log|x_n^{i,l}-x_n^{j,l}|=\mathbb E\Big[\log|1-h\lambda_0-\sigma\sqrt{h}Z|\Big],\quad Z\sim N(0,1),
		\end{align}
		so the sign of the expectation determines convergence: negative implies almost sure consensus, positive implies failure. We define
		\begin{align*}
			\Lambda(\sigma):=\mathbb E\Big[\log|1-h\lambda_0-\sigma \sqrt{h}Z|\Big]=\log(\sigma\sqrt{h})+\mathbb E\left[\log\Big|Z-\frac{1-h\lambda_0}{\sigma\sqrt{h}}\Big|\right].
		\end{align*}
		\begin{figure}[htbp]
			\centering
			\begin{overpic}
				[width=0.6\linewidth]{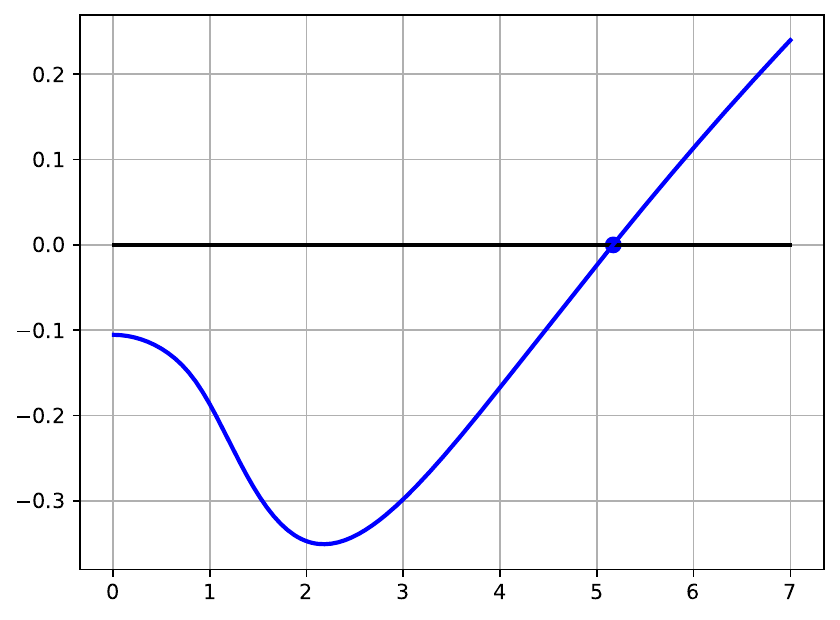}
				\put(53,-0.5){$\sigma$}
				\put(-5,39){$\Lambda(\sigma)$}
				\put(74,40){\bf\color{blue}$\sigma^*$}
			\end{overpic}
			\caption{Determinant value $\Lambda(\sigma)$ when $h=0.1$ and $\lambda_0=1$.}
			\label{Lambda_graph}
		\end{figure}
		In our experiments, we set $h=0.1$ and $\lambda_0=1$. \Cref{Lambda_graph} shows $\Lambda(\sigma)$, indicating that CBO with noise achieves consensus for $0\le\sigma<\sigma^*$ where $\sigma^*$ is the root of $\Lambda(\sigma)$ with $\sigma^*\approx5.17$. Accordingly, we choose $\sigma = 1, \dots, 5$ and $\lambda_1 = 1, \dots, 5$.
	}
	\begin{center}
			\begin{tabular}{|l|c|c|c|c|c|c|}
				\noalign{\hrule height 0.5pt}
				CBO ($\lambda_1=0$)\T\B& \cellcolor{CBO}$\sigma=0$ &\cellcolor{CBO} $\sigma=1$\T&\cellcolor{CBO} $\sigma=2$&\cellcolor{CBO} $\sigma=3$ &\cellcolor{CBO} $\sigma=4$ &\cellcolor{CBO} $\sigma=5$\\
				\hline\hline
				$\mathbb E\left[L(\bx_\infty)\right]$\T& 12.315 & 11.752 & 10.963 & 9.781 & 9.591 & 11.297\\
				$\mbox{Var}\left[L(\bx_\infty)\right]$ & 1.447 & 1.075 & 1.443 & 1.679 & 1.756 & 2.107\\
				$\mathbb E\left[\sharp\mbox{ of iterations}\right]$\T& 94 & 94.92 & 66.66 & 94.42 & 225.44 & 3738.38\\
				$\mbox{Var}\left[\sharp\mbox{ of iterations}\right]$ & 0 & 307.914 & 261.944 & 1070.084 & 8000.126 & $1.553\times 10^6$\\
				\hline\hline
				{Ad-CBO ($\sigma=0$)}\T\B&\cellcolor{adCBO}$\lambda_1=1$\T&\cellcolor{adCBO}$\lambda_1=2$ &\cellcolor{adCBO} $\lambda_1=3$ &\cellcolor{adCBO} $\lambda_1=4$ &\cellcolor{adCBO} $\lambda_1=5$ & \cellcolor{adamCBO}Adam-CBO\\
				\hline\hline
				$\mathbb E\left[L(\bx_\infty)\right]$\T& 9.202 & 8.138 & 7.717 & 7.477 & 7.176 & 8.961\\
				$\mbox{Var}\left[L(\bx_\infty)\right]$ & 0.988 & 0.709 & 0.957 & 0.915 & 0.837 & 0.441\\
				$\mathbb E\left[\sharp\mbox{ of iterations}\right]$\T& 94 & 94 & 94 & 94 & 94 & 214.1\\
				$\mbox{Var}\left[\sharp\mbox{ of iterations}\right]$ & 0 & 0 & 0 & 0 & 0 & 83.706\\
				\hline
			\end{tabular}
		\captionof{table}{Simulation Results from Solving Rastrigin Function by CBO and Ad-CBO: Statistics based on fifty simulations with the same parameter setting ($N=50,~h=0.1,~\lambda_0=1,~\beta=100,~\epsilon_{\mathrm{tol}}=10^{-6}$)}
		\label{new_sim_rast}
	\end{center}
	\begin{figure}
		\centering
		\begin{overpic}
			[width=0.7\linewidth]{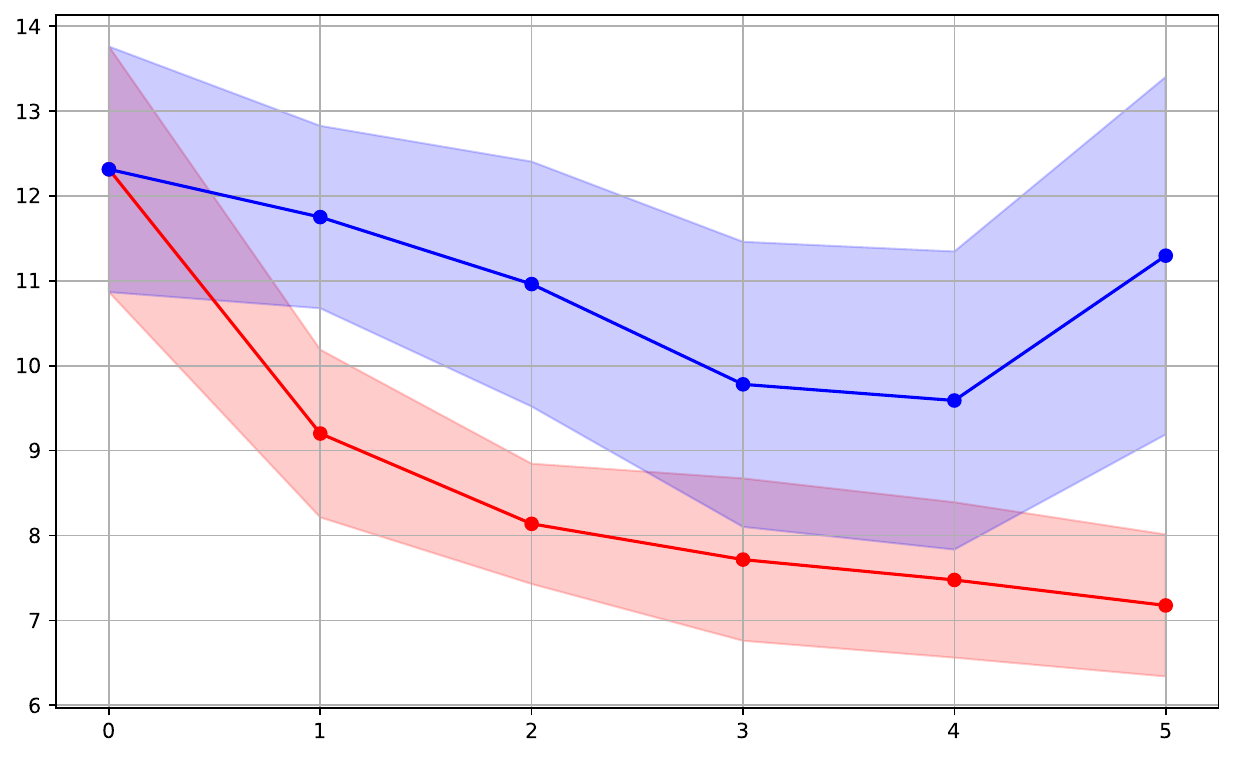}
			\put(40,-1.8){$\lambda_1$(red), $\sigma$(blue)}
			\put(-3.2,13){\rotatebox{90}{$\mathbb E[L(\bx_\infty)]\pm\mbox{SD}[L(\bx_\infty)]$}}
		\end{overpic}
		\caption{Confidence interval for $L(\bx_\infty)$ obtained by CBO with noise (blue) and Ad-CBO (red) according to $\lambda_1$ and $\sigma$, respectively. The solid lines with dots represent the expectations.}
		\label{conf_int}
	\end{figure}
	
	In Table \ref{new_sim_rast}, we summarize results by providing the mean and variance of {\color{black}$\left\{L(\bx_{\infty,k})\right\}_{k=1}^{50}$} and $\left\{n_k\right\}_{k=1}^{50}$, respectively. After many tests of choosing adequate parameter values, we use $\beta=100$ for controlling an efficient drift toward the minimum point. In Table \ref{new_sim_rast}, we tabulate results from various tests with different $\sigma$ and $\lambda_1$. {\color{black}For readers' convenience, we plot the confidence interval for $L(\bx_\infty)$ in \Cref{conf_int}.} The result from CBO ($\sigma = 0$) is given as a benchmark in the top-left panel of Table \ref{new_sim_rast}.
	
	{\color{black}
		For CBO with noise, one observes that increasing $\sigma$ initially leads to better optimization performance. However, once $\sigma$ exceeds a certain threshold, the performance degrades. This is due to the fact that excessive noise hinders convergence, as the risk of losing good candidates outweighs the potential gain from exploring new areas. Hence, although convergence is theoretically guaranteed ($\sigma<\sigma^*$), $\sigma$ cannot be chosen arbitrarily large.
		
		In terms of convergence speed, the case $\sigma=2$ achieves the fastest convergence among cases, in line with our earlier analysis of $\Lambda(\sigma)$. The corresponding average number of iterations is $66.66$, which appears smaller than the constant $94$ iterations of Ad-CBO. Nevertheless, the variance of $261.944$ indicates that this behavior is highly unstable: the actual number of iterations fluctuates significantly, and in some cases convergence may take considerably longer.
		
		By contrast, Ad-CBO exhibits steadily improving optimization performance as $\lambda_1$ increases. Its variance remains low, and the average optimization value is consistently better than that of CBO with noise. Moreover, the number of iterations is always fixed at $94$, matching that of CBO without noise. This provides not only superior optimization results but also predictable convergence behavior.
		
		Finally, when compared to Adam-CBO, an Adam-type variant known for strong performance in optimization, Ad-CBO yields better average optimization values for $\lambda_1 \geq 2$. Although Adam-CBO achieves smaller variance, the confidence intervals indicate that Ad-CBO remains the superior method overall. The advantage is further supported by iteration counts, where Ad-CBO clearly outperforms. Regarding computational cost, we note that the inclusion of $\bar{\bx}_n$ does not increase computational complexity beyond $\mathcal{O}(n)$, meaning that Ad-CBO matches or even improves upon the efficiency of CBO.}
	
	\subsection{Dynamic case}\label{sec_dynamics}
	In another numerical test, we investigate the performance of Ad-CBO in real problems of tracking a moving optimal point. As a real online problem, we consider an \textit{online portfolio selection (OPS) problem} of an investor, who searches for the best combination or portfolio of financial assets. We investigate the performance of Ad-CBO relative to CBO and Adam-CBO according to the best and average values of our objective function by each algorithm, and variances. Furthermore, the best portfolio's volatility, i.e., the standard deviation of $\{{\br}_n\cdot\bx_n^{\mathrm{best}}\}_n$,
	where $\bx_n^{\mathrm{best}}$ is chosen by
	\[\bx_n^{\mathrm{best}}\in\left\{\bx_n^i~:~L(\bx_n^i)=\min_{j\in[N]}L(\bx_n^j)\right\},\]
	is considered for a safe investment strategy. {\color{black}Here, ${\br}_n$ denotes a price relative vector which will be defined in the next paragraph.} Based on the results, we discuss the efficiency of Ad-CBO algorithm to be applied for resolving problems of a real stock market. 
	
	
	The OPS problem is a sequential capital allocation in an asset pool, pursuing the maximum final wealth in the long run \cite{L-H}. For certain time period from $t_0$ to $t_M$, an online portfolio investor renews her wealth allocation over $d$ assets at each time $t_n$ for $n=0,\cdots,M$. We denote some quantity at $t_n$ as a subscript of $n$. The market price change during time period $[t_{n-1},t_n]$ is denoted by a $d$-dimensional price relative vector ${\br}_n$:
	\begin{align*}
		{\br}_n = (r_n^1, \cdots, r_n^d)\in \mathbb{R}^d_{+},\quad n \in\mathbb N,
	\end{align*}
	where $r_n^i$ is the (price) return of the $i$-th asset. In other words, the return vector ${\br}_n$ is defined by the ratio of closing prices between $n-1^{th}$ and $n^{th}$ day, i.e.,
	\[r_n^i=\frac{\mbox{Price of $i$-th asset at time }t_n}{\mbox{Price of $i$-th asset at time }t_{n-1}},\quad\forall~i\in[d].\]
	Here, we assume ${\br}_n$ and ${\br}_{n'}$ are independent if $n\ne n'$. An investor makes a portfolio decision at time $t_n$ based on historical return data ${\br}_{n-1}$, before we see closing prices and realized returns.
	
	Referring to the optimal portfolio selection problem of \cite{BHKLMY}, our test is designed to find the best portfolio weight of six stocks:
	\[\mbox{AAPL},\quad\mbox{MSFT},\quad\mbox{SBUX},\quad\mbox{DB},\quad\mbox{GLD},\quad\mbox{TSLA.}\]
	The first two stocks, Apple, Inc. (AAPL) and Microsoft (MSFT) are indexed on the Dow Jones Industrial Average Index\footnote{The Dow Jones Industrial Average, Dow Jones, or simply the Dow, is a stock market index of 30 prominent companies listed on stock exchanges in the United States.}. We also consider stocks of two most-followed stock market indices along with the DJIA in the United States by including Starbucks (SBUX) and Tesla (TSLA) of the Nasdaq Composite, and Deutsche Bank Aktiengesellschaft (DB) of the New York Stock Exchange (NYSE). Finally, we include SPDR Gold Shares (GLD) to reflect the performance of the price of gold bullion.
	
	By using historical returns from the first day of 2017 to the last day of 2019, we set 692 windows which are equipped with mean and covariance matrix of return over consecutive 60 days. Windows are rolled by one day over the sample period, denoted by
	\begin{align*}
		\mu_n=\mathbb E\left[\left\{{\br}_k\right\}_{k=n-59}^{n}\right]\quad\mbox{and}\quad\Sigma_n=\mbox{Cov}\left[\left\{{\br}_k\right\}_{k=n-59}^{n}\right],\quad\forall~n=0,\cdots,691.
	\end{align*}
	Following \cite{M}, an investor's objective is to find the best combination of weights on assets, which yields the highest Sharpe ratio. Hence, in the setting of our numerical simulation, the objective function is the negative Sharpe ratio to be minimized as suggested in \eqref{B-1}, i.e.,
	\[L_n(\bx):=-\frac{\mu_n\cdot\bx}{\sqrt{\bx\Sigma_n\bx^\intercal}},\quad\forall~n=0,\cdots,691.\]
	Since the portfolio selection procedure occurs before a current window's return rates are realized, the objective function $L_n$ is used to update $\bx_n$ to get $\bx_{n+1}$.
	
	By considering a particle $\bx_n^i$ as one of alternative weights on an asset, we assume that an element is between $0$ and $1$ and they are summed {\color{black}up to} 1. We impose $\bx_n^i$ to be in the admissible set $\mathcal S$ defined by
	\[\mathcal S:=\left\{(x_1,\cdots,x_d)~:~x_i\ge0,~\forall~i\in[d]\quad\mbox{and}\quad\sum_{i=1}^dx_i=1.\right\}\]
	In short, (Ad-)CBO algorithm is applied to solve the OPS problem as follows:
	\begin{align*}
		\begin{dcases}
			\by_{n+1}^i=\bx_n^i-\lambda_0h(\bx_n^i-{\bf M}_{\beta,n})-\lambda_1h(\bar{\bx}_n-{\bf M}_{\beta,n})-\sigma({\bx}_n^i-{\bf M}_{\beta,n})\odot{\bf W}_{h,n},\\
			\bx_{n+1}^i=\pi(\by_{n+1}^i),\quad\forall~n=0,\cdots,691,
		\end{dcases}
	\end{align*}
	where $\pi:\mathbb R^d\to\mathcal S$ is a projection map \cite{W-CP} and
	\begin{align}\label{CBO_port}
		\begin{aligned}
			{\bf M}_{\beta,n}=\frac{\sum_{i=1}^Ne^{-\beta L_n(\bx_n^i)}\bx_n^i}{\sum_{i=1}^Ne^{-\beta L_n(\bx_n^i)}}.
		\end{aligned}
	\end{align}
	Here, we use the projection map introduced in \cite{W-CP}. So, what we call CBO is \eqref{CBO_port} with $\lambda_1=0$ and Ad-CBO is \eqref{CBO_port} with $\sigma=0$. We use $\beta_1=0.9$ and $\beta_2=0.99$ as in the static case.
	
	We denote the $i$-th portfolio weight at $n$-th date in $k$-th simulation for $k = 1, \dots, 50$ by $\bx_{k,n}^i$. We summarize the results according to expectation and variance of the best and average approximate, respectively. At every step in each simulation, we choose the best {\color{black}particle $\bx_{k,n}^{\mathrm{best}}$} such as
	\[L(\bx_{k,n}^{\mathrm{best}})=\min_{i\in[N]}L(\bx_{k,n}^i),\]
	and get the average value for all dates such that
	{\color{black}\[L_k^{\mathrm{best}}:=\frac{1}{692}\sum_{n=0}^{691}L(\bx_{k,n}^{\mathrm{best}}),\quad\forall~k=1,\cdots,50.\]}
	The expectation and variance of {\color{black}$\left\{L_k^{\mathrm{best}}\right\}_{k=1}^{50}$} summarize the results of the best values. Next, we examine such estimates with the average value, i.e., the expectation and variance of $\left\{\overline{L(\bx_k)}\right\}_{k=1}^{50}$ where
	\[\overline{L(\bx_k)}=\frac{1}{692}\sum_{n=0}^{691}\left(\frac{1}{N}\sum_{i=1}^NL(\bx_{k,n}^i)\right),\quad\forall~k=1,\cdots,50.\] Finally, we estimate the portfolio's volatility to see what is the stability of the portfolio suggested by each algorithm given unexpected shocks to financial markets. The portfolio volatility is estimated by the standard deviation of $\left\{{\br}\cdot{\bx}_k\right\}_{k=1}^{50}$, which is defined by
	\[{\br}\cdot{\bx}_k:=\frac{1}{692}\sum_{n=0}^{691}{\br}_n\cdot\left(\frac{1}{N}\sum_{i=1}^N\bx_{k,n}^i\right),\quad\forall~k=1,\cdots,50.\]
	
	\begin{center}
		\begin{tabular}{|l|c|c|c|c|}
				\hline
				CBO ($\lambda_1=0$)\T\B
				& \multicolumn{1}{c|}{\cellcolor{CBO}$\sigma=0$}
				& \multicolumn{1}{c|}{\cellcolor{CBO}$\sigma=1$}
				& \multicolumn{1}{c|}{\cellcolor{CBO}$\sigma=2$}
				& \multicolumn{1}{c|}{\cellcolor{CBO}$\sigma=3$}\\
				\hline\hline
				$\mathbb E\left[L^{\mathrm{best}}\right]$ \T & -1.787 & -1.754 & -1.743 & -1.773\\
				$\mbox{Var}\left[L^{\mathrm{best}}\right]$ & 0.0127 & 0.0240 & 0.0337 & 0.0635\\
				$\mathbb E\left[\overline{L(\mbox{\boldmath $x$})}\right]$ & -1.739 & -1.698 & -1.693 & -1.671\\
				$\mbox{Var}\left[\overline{L(\mbox{\boldmath $x$})}\right]$ & 0.0121 & 0.0258 & 0.0350 & 0.0667 \\
				Portfolio's Volatility & 0.0176 & 0.0216 & 0.0289 & 0.0456 \\
				\hline\hline
				Ad-CBO ($\sigma=0$)\T\B 
				& \multicolumn{1}{c|}{\cellcolor{adCBO}$\lambda_1=1$}
				& \multicolumn{1}{c|}{\cellcolor{adCBO}$\lambda_1=2$}
				& \multicolumn{1}{c|}{\cellcolor{adCBO}$\lambda_1=3$}
				& \multicolumn{1}{c|}{\cellcolor{adamCBO}Adam-CBO}\\
				\hline\hline
				$\mathbb E\left[L^{\mathrm{best}}\right]$\T & -1.883 & -1.899 & -1.897 & -3.028\\
				$\mbox{Var}\left[L^{\mathrm{best}}\right]$ & 0.0049 & 0.0027 & 0.0013 & 0.0346\\
				$\mathbb E\left[\overline{L(\mbox{\boldmath $x$})}\right]$ & -1.849 & -1.872 & -1.875 & -0.745\\
				$\mbox{Var}\left[\overline{L(\mbox{\boldmath $x$})}\right]$ & 0.0048 & 0.0026 & 0.0013 & 0.0844\\
				Portfolio's Volatility & 0.0123 & 0.0085 & 0.0112 & 0.0589\\
				\hline
		\end{tabular}
		\captionof{table}{Results from Solving the Online Portfolio Problem by CBO, CBO with noise and Ad-CBO: Statistics based on fifty simulations with the same parameter setting ($N=20,~h=0.1,~\lambda_0=1,~\beta=1-0$)}
		\label{new_sim_port}
	\end{center}
	
	In Table \ref{new_sim_port}, the results of various type of CBO's portfolio selection are summarized. According to the trends of {\color{black}$\mathbb E\left[L^{\mathrm{best}}\right]$}, Adam-CBO is the best, then Ad-CBO's results are better than them of CBO. From the lower variance of {\color{black}$\mathbb E\left[L^{\mathrm{best}}\right]$} for Adam-CBO and Ad-CBO compared to CBO, we find that Adam-CBO and Ad-CBO perform well.
	
	In the terms of portfolio's volatility, Adam-CBO exhibits the lowest quality with 0.0589. Meanwhile, CBO and Ad-CBO have different tendencies in portfolio's volatility with each others. When CBO is applied, portfolio's volatility increases with higher $\sigma$. However, in Ad-CBO, portfolio's volatility stays around 0.01 as $\lambda_1$ increase and the Ad-CBO's performance improves. Therefore, Ad-CBO's strategy can achieve higher {\color{black}Sharpe ratios} with lowering or maintaining the total risk of {\color{black}variablity of such high Sharpe ratio. Furthermore,} it is an absolutely better strategy {\color{black}from an investor's view to minimize portfolio management costs and risks.}
	\subsection{Regret bound for Ad-CBO in optimal portfolio problem}
	We discuss the \textit{regret bound} \cite{IHSYFKK} of Ad-CBO alrogithm in OPS problem introduced in the previous subsection. As the wealth changes at every time $t_n$ by ${\br}_n\cdot {\bx}_n$ in the period from $t_0$ to $t_{M}$, a portfolio strategy $\{\bx_n\}_{n=1}^M$ makes the last wealth $S_M$ to be the initial wealth $S_0$ multiplied by $ \prod_n {\br}_n\cdot {\bx}_n$, i.e.,
	\[S_{M} = S_0 \prod_{n=1}^{M} {\br}_n\cdot{{\bx}_n} = S_0 \prod_{n=1}^{M} \sum_{i=1}^{d}  r_{n}^i  x_n^i.\]
	We use the wealth $S_M$ to estimate the performance of the suggested algorithm. In literature, there are various approaches to maximize the final wealth: ``Buy-And-Hold", ``Follow-the-Winner" to ``Meta-Learning Algorithms"\cite{D-G-U, L-H}. The above relation can define the exponential growth rate of the strategy $\{{\bx}_{n}\}_{n=1}^M$ for the investment during a period $[t_0,t_M]$ as
	\[\sum_{n=1}^{M}\log\left({\br}_n\cdot {\bx}_n\right) = \log S_{M} - \log S_0.\]
	
	Regret bound is the difference between exponential growth rates of wealth made by the suggested investment strategy and the best constant portfolio strategy in hindsight, which is a key estimate to evaluate an algorithm's performance in solving the OPS problem \cite{H-K}, \cite{L-H}. To the best of our knowledge, previous literature on the OPS has only provided numerical simulation results for estimating the performance of algorithms. Here, we propose the formula of regret bound $R_M$ until $M$ as follows:
	\begin{align}\label{regret}
		R_M(\bx_n):=\max_{\bx\in\mathcal S}\left(\frac{1}{M}\sum_{n=1}^M\log(1+{\br}_n\cdot\bx)-\frac{1}{M}\sum_{n=1}^M\log(1+{\br}_n\cdot\bar\bx_n)\right),\quad\forall~M\ge1.
	\end{align}
	Assume that returns are given as
	\begin{align*}
		{\br}_n=(r_n^1,\cdots,r_n^d),\quad\forall~n\ge0,
	\end{align*}
	where random variables $r_n^i$, $i\in[d]$ are i.i.d., satisfying
	\begin{align}\label{r_bound}
		r_n^i\in(\underline r,\overline r),\quad\forall~i\in[d],~\forall~n\ge0,
	\end{align}
	for some constants $-1<\underline{r}<0<\overline{r}$. If we have similar estimates with our main analysis results for the online objective function, i.e., time-dependent counterpart of Theorem \ref{T3.7}, then meaningful upper bound of the regret bound is found with $\beta=\infty$. We leave it in Appendix \ref{apdx} for readers who may be interested.
	Although more future works are needed to investigate conditions in a rigorous way, we believe that our mathematical analysis in Appendix \ref{apdx} is the first and important step for the analysis on the regret bound.
	
	\begin{figure}
		\begin{subfigure}[ht]{0.45\textwidth}
			\includegraphics[width=1.15\textwidth]{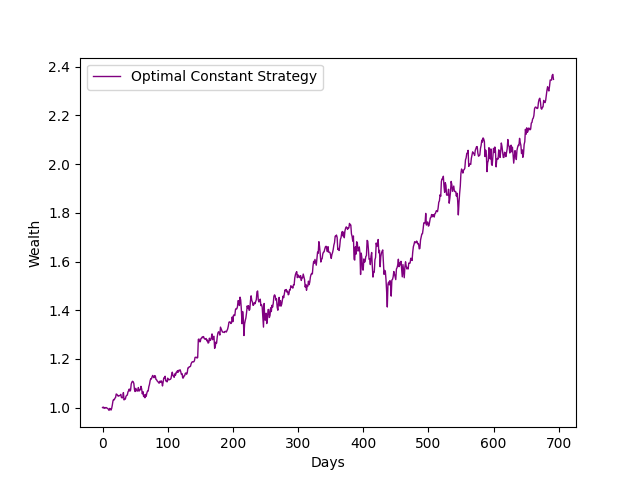} \subcaption{Benchmark by SLSQP}
		\end{subfigure}
		\hspace{.2cm}
		\begin{subfigure}[ht]{0.45\textwidth}
			\includegraphics[width=1.15\textwidth]{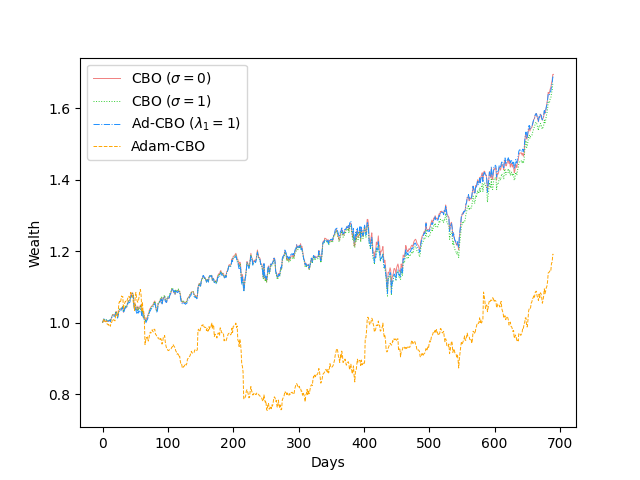}
			\subcaption{CBO, Ad-CBO and Adam-CBO}
		\end{subfigure}
		\caption{Wealth evolution: average of fifty simulations wealth based on CBO ($\sigma=0,1$), Ad-CBO ($\lambda_1=1$) and Adam-CBO ($\beta_1=0.9,\beta_2=0.99$).}
		\label{wealth_graph}
	\end{figure}
	
	To compare the efficiency of CBO, Ad-CBO and Adam-CBO in terms of regret bound, we use Sequential Least SQuares Programming(SLSQP) method to find the optimal constant strategy, i.e., argmax $\bx$ in \eqref{regret}, with the same sample data and end time $M=692$ with CBO-type algorithms. In Figure \ref{wealth_graph}, we show the wealth evolution based on the optimal constant strategy obtained SLSQP as a benchmark on the top and the wealth evolution based on CBO with or without nosie and Ad-CBO on the bottom panel, respectively. Here, the results by CBO-type algorithms are from the same simulations in Section \ref{sec_dynamics}. To be more precise, we show
	\begin{align*}
		&{\br}_n\cdot\bx\quad\mbox{in Figure \ref{wealth_graph}(A)}\quad\mbox{and}\quad\frac{1}{50}\sum_{k=1}^{50}{\br}_n\cdot\overline{\bx}_{k,n}^{\mathrm{best}}\quad\mbox{in Figure \ref{wealth_graph}(B).}
	\end{align*}
	Indeed, the wealth evolution based on this optimal constant strategy obtained by SLSQP demonstrates noticeably higher wealth after some initial period. It is no wonder as SLSQP calculates from a bird's eye view to use all information of the overall sample period to find the best portfolio whereas CBO-type algorithms only use the information until the day before at each days.
	
	In Figure \ref{wealth_graph}(B), wealth evolution by CBO and Ad-CBO have overall similar trend whereas that by Adam-CBO markedly different. Such results are also explained by the property of Adam optimizer. The principle of Adam optimizer is that particles keep their memories and find the best one through all information. So, they heavily rely on prior experience, which is not suitable to provide a good online strategy in stock market although they achieved good results in searching a best point for objective function. In Figure \ref{wealth_graph}(B), the wealth generated by Adam-CBO has a similar fluctuation with CBO and Ad-CBO in earlier period, but soon, they lost and the reasonably good wealth level is not easily attained by those particles. Such a trend reveals that Adam-CBO cannot be an appropriate algorithm for an investor to seek for a long-term strategy of wealth maximization.
	Lastly, we calculate the correlation of each CBO-type algorithm with SLSQP result. To summarize, the numeric results show that the trend of wealth obtained by CBO without noise (0.8176) and with noise (0.7957) and Ad-CBO (0.7986) are strongly correlated with the benchmark wealth whereas Adam-CBO has relatively a low correlation (0.5181). Furthermore, if an investor chooses one among the four CBO methods, Ad-CBO demonstrates superior performance on average: across 50 independent simulations under the same time-varying objective function, it achieves higher average terminal wealth while maintaining significantly lower average volatility than the other schemes.
	
	\section{Conclusion} \label{sec:5}
	In this paper, we proposed a consensus-based optimization algorithm with average drift and applied it to the online portfolio selection problem. As {\color{black}shown} in Theorem 4.1 of \cite{H-J-Kc}, when the convex hull of the initial states does not contain a global minimum of {\color{black}the} objective function, the consensus state generated by the CBO method without stochastic diffusion cannot reach the theoretical minimum. {\color{black}To address this limitation, we introduce a new ingredient, the {\it average drift}, which helps steer the swarm toward the global minimum.} The resulting Ad-CBO algorithm provides a better approximation of the optimal point for a time-varying objective function.
	
	In this paper, we propose a consensus based optimization algorithm with average drift and applied it to online portfolio selection problem. As found by Theorem 4.1 in \cite{H-J-Kc}, when the convex hull of initial states does not contain a global minimum of objective function, the consensus state by the CBO method without stochastic diffusion cannot reach the theoretical minimum of objective function. To address this limitation of the non-stochastic CBO algorithm, we introduce a new ingredient, the {\it average drift}, which helps steer the swarm toward the global minimum point. This Ad-CBO algorithm exhibits a better approximate for the optimal point of a time--varying objective function. 
	Particularly, we estimate the performance of Ad-CBO in solving the optimal portfolio selection (OPS) problem. For the OPS problem, we provide an upper bound estimate for the regret to Ad-CBO method which is far from optimal upper bound. There are several unresolved issues for the choices of the number of particles, $\beta$, and initial data. We leave these interesting issues for a future work.

	\appendix
	\section{Upper bound of regret bound ($\beta=\infty$)}\label{apdx}
	
	In this appendix, we explain how a meaningful upper bound of regret bound can be obtained and what it means, if we make estimates for online counterpart of Theorem \ref{T3.7}, i.e., consensus and optimized value, and have an assumption on $\left\{{\br}_n\right\}_{n\ge0}$ stated in Section \ref{sec:5}.\newline
	
	First of all, we propose the following notations:
	\begin{align*}
		\bx_n^\sharp\in\argmax_{\bx\in\mathcal S}({\br}_n\cdot\bx),\quad\forall~n\ge0\quad\mbox{and}\quad\bx^\sharp\in\argmax_{\bx\in\mathcal S}\sum_{n=1}^M\log(1+{\br}_n\cdot\bx).
	\end{align*}
	Recall that the regret bound $R_M$ is defined by
	\begin{align*}
		R_M(\bx_n):=\max_{\bx\in\mathcal S}\left(\frac{1}{M}\sum_{n=1}^M\log(1+{\br}_n\cdot\bx)-\frac{1}{M}\sum_{n=1}^M\log(1+{\br}_n\cdot\bar\bx_n)\right),\quad\forall~M\ge1.
	\end{align*}
	Now, we use the convexity of $f_n(\bx):=-\log({\br}_n\cdot\bx)$ to get
	\begin{align} \label{C-6}
		R_M =\frac{1}{M}\sum_{n=1}^M\left(f_n(\bar{\bx}_n)-f_n({\bx}^\sharp)\right)
		\le\frac{1}{M}\sum_{n=1}^M (\bar{\bx}_n-{\bx}^\sharp)\cdot\nabla f_n(\bar{\bx}_n) =\frac{1}{M} \sum_{n=1}^M\frac{{\br}_n\cdot\left({\bx}^\sharp -\bar{\bx}_n\right)}{1+{\br}_n\cdot \bar{\bx}_n}.
	\end{align}
	On the other hand, note that
	\[  {\bar {\bx}}_n = ({\bar x}_n^1, \cdots, {\bar x}_n^d), \quad {\bx}^\sharp = (x^{\sharp,1}, \cdots, x^{\sharp,d}), \quad  \sum_{l=1}^{d} {\bar x}^l_n = 1, \quad \sum_{l=1}^{d} x^{\sharp,l} = 1 \]
	and denote
	\[{\bf j}_n:=1+{{\br}_n},\quad\forall~n\ge0,\]
	to see
	\begin{align}
		\begin{aligned} \label{C-7}
			& 1 = {\bf 1} \cdot \bar{\bx}_n, \quad 1+{\br}_n\cdot \bar{\bx}_n = ({\bf 1} + {\br}_n ) \cdot \bar{\bx}_n ={\bf j}_n \cdot \bar{\bx}_n, \\
			& {\br}_n\cdot\big({\bx}^\sharp-\bar{\bx}_n\big) = ({\bf 1} + {\br}_n ) \cdot ({\bx}^\sharp-\bar{\bx}_n) - {\bf 1} \cdot   ({\bx}^\sharp-\bar{\bx}_n) =  {\bf j}_n \cdot  ({\bx}^\sharp-\bar{\bx}_n).
		\end{aligned}
	\end{align}
	Note that the bounded assumption \eqref{r_bound} for ${\br}_n$ can be restated as
	\begin{align*}
		j_n^i\in(\underline{j},\overline{j}),\quad\forall~i\in[d],~\forall~n\ge0,
	\end{align*}
	for some constants $0<\underline{j}<1<\overline{j}$. Finally, we combine \eqref{C-6} and \eqref{C-7} to obtain
	\begin{align} \label{C-8}
		R_M \le \frac{1}{M}\sum_{n=1}^M\frac{{\bf j}_n\cdot\left({\bx}^\sharp-\bar{\bx}_n\right)}{{\bf j}_n\cdot \bar{\bx}_n}
		=\frac{1}{M}\sum_{n=1}^M\frac{ {\bf j}_n\cdot {\bx}^\sharp}{ {\bf j}_n\cdot\bar{\bx}_n}-1.
	\end{align}
	We use this to obtain the expectation of upper bound for $R_M$ as follows:
	\begin{align*}
		R_M&\le\frac{1}{M}\sum_{n=1}^M\frac{{\bf j}_n\cdot {\bx}^\sharp}{ {\bf j}_n\cdot\bar{\bx}_n}-1\le\frac{1}{M}\sum_{n=1}^M\frac{{\bf j}_n\cdot {\bx}_{n}^\sharp}{{\bf j}_n\cdot\bar{\bx}_n}-1\\
		&=\frac{1}{M}\sum_{n=1}^M\Bigg[\left(\frac{ {\bf j}_n\cdot {\bx}_{n}^\sharp}{{\bf j}_n\cdot\bar{\bx}_n}-\frac{{\bf j}_{n-1}\cdot {\bx}_{n-1}^\sharp}{{\bf j}_n\cdot\bar{\bx}_n}\right)+\left(\frac{ {\bf j}_{n-1}\cdot {\bx}_{n-1}^\sharp}{ {\bf j}_n\cdot\bar{\bx}_n}-\frac{ {\bf j}_{n-1}\cdot {\bx}_{n-1}^\sharp}{{\bf j}_{n-1}\cdot\bar{\bx}_n}\right) \\
		&\hspace{2.2cm} +\left(\frac{{\bf j}_{n-1}\cdot {\bx}_{n-1}^\sharp}{{\bf j}_{n-1}\cdot\bar{\bx}_n}-\frac{ {\bf j}_{n-1}\cdot\bar{\bx}_{n}}{ {\bf j}_{n-1}\cdot\bar{\bx}_n}\right)\Bigg]\\
		&=:\frac{1}{M}\sum_{n=1}^M \Big [ {\mathcal I}_{11} + {\mathcal I}_{12} + {\mathcal I}_{13} \Big ].
	\end{align*}
	This implies
	\begin{align} \label{C-10-1}
		\mathbb{E}_{\bx}\Big[ \mathbb{E}_{\bf j}\big[R_M\big] \Big] \leq \frac{1}{M}\sum_{n=1}^M \left(\mathbb{E}_{\bx}\Big[ \mathbb{E}_{\bf j}\big[\mathcal I_{11}\big] \Big]+\mathbb{E}_{\bx}\Big[ \mathbb{E}_{\bf j}\big[\mathcal I_{12}\big] \Big]+\mathbb{E}_{\bx}\Big[ \mathbb{E}_{\bf j}\big[\mathcal I_{13}\big] \Big]\right).
	\end{align}
	In what follows, we provide estimates for $\mathcal I_{11},~\mathcal I_{12}$ and $\mathcal I_{13}$ one by one. \newline
	
	\noindent $\bullet$~(Estimate of $\mathbb{E}_{\bx}\mathbb{E}_{\bf j}\mathcal I_{11}$): Since ${\bf j}_n$ and ${\bf j}_{n-1}$ are independent for each $n\in\mathbb N$, and ${\bx}_n^\sharp$ only depends on ${\bf j}_n$, we have
	\begin{align}\label{C-10-2}
		\begin{aligned}
			&\mathbb E_{\bf j}\left[\Big( {\bf j}_n\cdot {\bx}_{n}^\sharp- {\bf j}_{n-1}\cdot {\bx}_{n-1}^\sharp \Big)^2\right]\\
			&\hspace{1cm}=\mathbb E_{\bf j}\left[\left({\bf j}_n\cdot{\bx}_n^\sharp\right)^2\right]+\mathbb E_{\bf j}\left[\left({\bf j}_{n-1}\cdot{\bx}_{n-1}^\sharp\right)^2\right]-2\mathbb E_{\bf j}\left[{\bf j}_n\cdot{\bx}_n^\sharp\right]\mathbb E_{\bf j}\left[{\bf j}_{n-1}\cdot{\bx}_{n-1}^\sharp\right]\\
			&\hspace{1cm}=2\mathbb E_{\bf j}\left[\left({\bf j}_n\cdot{\bx}_n^\sharp\right)^2\right]-\left(\mathbb E_{\bf j}\left[{\bf j}_n\cdot{\bx}_n^\sharp\right]\right)^2,
		\end{aligned}
	\end{align}
	where we have used
	\[
	\mathbb{E}_{\bf j}  \left[ \Big(  {\bf j}_n\cdot {\bx}_{n}^\sharp \Big)^2 \right]  =  \mathbb{E}_{\bf j}  \left [ \Big( {\bf j}_{n-1}\cdot {\bx}_{n-1}^\sharp  \Big)^2 \right], \quad
	\mathbb{E}_{\bf j}  \Big[ {\bf j}_n\cdot {\bx}_{n}^\sharp \Big]=  \mathbb{E}_{\bf j} \Big[  {\bf j}_{n-1}\cdot {\bx}_{n-1}^\sharp  \Big ].
	\]
	Now, we use the Jensen's inequality and \eqref{C-10-2} to see
	\begin{align*}
		\Big(\mathbb{E}_{\bf j}\big[ {\mathcal I}_{11}\big] \Big)^2
		&\le \mathbb{E}_{\bf j}\Big[ {\mathcal I}_{11}^2 \Big]  =\mathbb{E}_{\bf j} \left[\left(\frac{ {\bf j}_n\cdot {\bx}_{n}^\sharp
			- {\bf j}_{n-1}\cdot {\bx}_{n-1}^\sharp} {{\bf j}_n\cdot\bar{\bx}_n}\right)^2\right] \\
		&\le\frac{2}{\underline{j}^2}\left(\mathbb{E}_{\bf j}\left[\left({\bf j}_n \cdot {\bx}_{n}^\sharp\right)^2\right]-\left(\mathbb{E}_{\bf j} \left[ {\bf j}_n\cdot {\bx}_{n}^\sharp \right]\right)^2\right) =\frac{2}{\underline{j}^2} \mbox{Var}\left( {\bf j}_n\cdot {\bx}_{n}^\sharp\right).
	\end{align*}
	This implies
	\begin{align}\label{C-11}
		\mathbb{E}_{\bf j}\left[ {\mathcal I}_{11}\right]  \leq \frac{\sqrt{2}}{\underline{j}} \sqrt{ \mbox{Var}\left( {\bf j}_n\cdot {\bx}_{n}^\sharp\right)}.
	\end{align}

	\vspace{0.2cm}
	
	\noindent $\bullet$~(Estimate of $\mathbb{E}_{\bx}\mathbb{E}_{\bf j}\mathcal I_{12}$): Similarly, we have
	\begin{align*}
		\begin{aligned}
			{\mathcal I}_{12} &=\frac{{\bf j}_{n-1}\cdot {\bx}_{n-1}^\sharp}{ {\bf j}_n\cdot\bar{\bx}_n}-\frac{{\bf j}_{n-1}\cdot {\bx}_{n-1}^\sharp}{{\bf j}_{n-1}\cdot\bar{\bx}_n}=\frac{{\bf j}_{n-1}\cdot {\bx}_{n-1}^\sharp}{\left({\bf j}_n\cdot\bar{\bx}_n\right)\left({\bf j}_{n-1}\cdot\bar{\bx}_n\right)}\Big(\big({\bf j}_{n-1}- {\bf j}_n\big)\cdot\bar{\bx}_n\Big)\\
			&\le \frac{1}{\underline{j}^2}({\bf j}_{n-1}\cdot {\bx}_{n-1}^\sharp)\cdot| {\bf j}_{n-1}- {\bf j}_n|\le\frac{(\overline{j}-\underline{j})\sqrt{d}}{\underline{j}^2}({\bf j}_{n-1}\cdot {\bx}_{n-1}^\sharp).
		\end{aligned}
	\end{align*}
	This yields
	\begin{align}  \label{C-12}
		\mathbb{E}_{\bf j}\left[{\mathcal I}_{12}\right] \leq  \frac{(\overline{j}-\underline{j})\sqrt{d}}{\underline{j}^2} \mathbb{E}_{\bf j}\Big[ {\bf j}_{n-1}\cdot {\bx}_{n-1}^\sharp \Big] = \frac{(\overline{j}-\underline{j})\sqrt{d}}{\underline{j}^2} \mathbb{E}_{\bf j}\Big[ {\bf j}_{n}\cdot {\bx}_{n}^\sharp \Big].
	\end{align}
	\vspace{0.2cm}
	
	\noindent $\bullet$~(Estimate of $\mathbb{E}_{\bx}\mathbb{E}_{\bf j}\mathcal I_{13}$): Here, we introduce a new index $k_n\in[d]$ which is chosen to satisfy
	\begin{align}\label{beta_x}
		k_n=\argmax_{i\in[N]}\left({\br}_n\cdot\bx_n^i\right),\quad\mbox{hence,}\quad{\bx}_n^{k_n}={\bf M}_{\infty,n}.
	\end{align}
	Note that
	\begin{align}\label{A8-1}
		\begin{aligned}
			{\mathcal I}_{13} &=\frac{ {\bf j}_{n-1}\cdot {\bx}_{n-1}^\sharp}{{\bf j}_{n-1}\cdot\bar{\bx}_n}-\frac{{\bf j}_{n-1}\cdot\bar{\bx}_{n}}{{\bf j}_{n-1}\cdot\bar{\bx}_n}=\frac{{\bf j}_{n-1}\cdot({\bx}_{n-1}^\sharp-\bar{\bx}_{n})}{{\bf j}_{n-1}\cdot\bar{\bx}_n}\\
			&=\frac{{\bf j}_{n-1}\cdot\left({\bx}_{n-1}^\sharp-{\bx}_n^{k_{n-1}}+{\bx}_n^{k_{n-1}}-\bar{\bx}_n\right)}{{\bf j}_{n-1}\cdot\bar{\bx}_n}.
		\end{aligned}
	\end{align}
	By \eqref{beta_x}, we have
	\begin{align*}
		{\bx}_n^{k_{n-1}}&={\bx}_{n-1}^{k_{n-1}}-\lambda_0h({\bx}_{n-1}^{k_{n-1}}-{\bf M}_{\infty,n-1})-\lambda_1h(\bar{\bx}_{n-1}-{\bf M}_{\infty,n-1})+\sigma({\bx}_{n-1}^{k_{n-1}}-M_{\infty,n-1})\odot{\bf W}_{h,n-1}\\
		&={\bf M}_{\infty,n-1}-\lambda_1h(\bar{\bx}_{n-1}-{\bf M}_{\infty,n-1}).
	\end{align*}
	Now, we subtract the following relation
	\begin{align*}
		\bar{\bx}_n=\bar{\bx}_{n-1}-h\left(\lambda_0+\lambda_1\right)\left(\bar{\bx}_{n-1}-{\bf M}_{\infty,n-1}\right)+\sigma\left(\bar{\bx}_{n-1}-{\bf M}_{\infty,n-1}\right)\odot {\bf W}_{h,n-1},
	\end{align*}
	from \eqref{beta_x} to get
	\begin{align}\label{A8-2}
		{\bx}_n^{k_{n-1}}-\bar{\bx}_n=\left(1-\lambda_0h\right)\left({\bf M}_{\infty,n-1}-\bar{\bx}_n\right)-\sigma\left(\bar{\bx}_{n-1}-{\bf M}_{\infty,n-1}\right)\odot {\bf W}_{h,n-1}.
	\end{align}
	Note that
	\[{\bf j}_{n-1}\cdot({\bx}_{n-1}^\sharp-{\bx}_n^{k_{n-1}}),\quad{\bf j}_{n-1}\cdot(\bx_{n-1}^{k_{n-1}}-\bar{\bx}_n)>0,\]
	hence, we use \eqref{A8-1} and \eqref{A8-2} to find
	\begin{align}\label{A8}
		\begin{aligned}
			\mathbb E_{\bf j}\big[\mathcal I_{13}\big]\le&\frac{1}{\underline{j}}
			\left(\mathbb E_{\bf j}\left[{\bf j}_{n-1}\cdot({\bx}_{n-1}^\sharp-{\bx}_n^{k_{n-1}})\right]+(1-\lambda_0h)\mathbb E_{\bf j}\left[{\bf j}_{n-1}\cdot(\bx_{n-1}^{k_{n-1}}-\bar{\bx}_n)\right]\right)\\
			&-\mathbb E_{\bf j}\left[\frac{{\bf j}_{n-1}\cdot\big(\sigma(\bar{\bx}_{n-1}-{\bf M}_{\infty,n-1})\odot {\bf W}_{h,n-1}\big)}{{\bf j}_{n-1}\cdot\bar{\bx}_n}\right].
		\end{aligned}
	\end{align}
	We combine all the estimates \eqref{C-10-1}, \eqref{C-11}, \eqref{C-12} and \eqref{A8} to derive
	\begin{align}\label{rb}
		\begin{aligned}
			&\mathbb E_{\bx}\left[\mathbb E_{\bf j}\left[R_M\right]\right]\\
			&\hspace{.5cm}\le\frac{1}{M\underline{j}}\sum_{n=1}^M\Bigg(\sqrt{2\mbox{Var}\left({\bf j}_n\cdot\bx_n^\sharp\right)}+\frac{(\overline{j}-\underline{j})\sqrt{d}}{\underline{j}}\mathbb E_{\bf j}\left[{\bf j}_n\cdot\bx_n^\sharp\right]\\
			&\hspace{1.5cm}+\mathbb{E}_{\bx} \left[\mathbb{E}_{\bf j}\left[ {\bf j}_{n-1}\cdot ({\bx}_{n-1}^\sharp-{\bx}_n^{k_{n-1}} ) \right]\right]+(1-\lambda_0h)\mathbb E_{\bx}\left[\mathbb E_{\bf j}\left[{\bf j}_{n-1}\cdot(\bx_n^{k_n-1}-\bar\bx_n)\right]\right]\Bigg).
		\end{aligned}
	\end{align}
	Note that the upper bound given in \eqref{rb} has four terms. First two are about the market itself, and the last two can be estimated according to the consensus and optimized value.
	
\end{document}